\newtheorem{theorem}{Theorem}[section]
\newtheorem{corollary}[theorem]{Corollary}
\newtheorem{lemma}[theorem]{Lemma}
\newtheorem{algorithm}[theorem]{Algorithm}
\theoremstyle{definition}
\newtheorem{defn}[theorem]{Definition}
\newtheorem*{remark}{Remark}
\numberwithin{equation}{section}
\newcommand{\basis}{\beta}
\newcommand{\basiselt}{\beta}
\newcommand{\basismat}{\redm_\basis}
\newcommand{\basisinv}{\basismat^{-1}}
\newcommand{\bconst}{\delta(\redm)}
\newcommand{\cpp}{C\nolinebreak\hspace{-.05em}\raisebox{.4ex}{%
    \tiny\textbf{+}}\nolinebreak\hspace{-.10em}\raisebox{.4ex}{\tiny\textbf{+}}}
\newcommand{\polyn}{\varphi(n)}
\newcommand{\R}{\mathbb{R}}
\newcommand{\qproj}{\mathscr{Q}}
\newcommand{\qcone}{\qproj^\vee}
\newcommand{\redm}{\widetilde{M}}
\newcommand{\regina}{\emph{Regina}}
\newcommand{\snap}{\emph{Snap}}
\newcommand{\unk}{-}
\newcommand{\Z}{\mathbb{Z}}
\DeclareMathOperator{\adj}{adj}
\DeclareMathOperator{\lcm}{lcm}
\DeclareMathOperator{\rank}{rank}
\title{A tree traversal algorithm for decision problems \\
in knot theory and 3-manifold topology}
\author{Benjamin A.~Burton and Melih Ozlen}
\date{March 31, 2012}
\begin{document}

\maketitle

\begin{abstract}
    In low-dimensional topology, many important decision algorithms
    are based on normal surface enumeration, which is a form of
    vertex enumeration over a high-dimensional and highly
    degenerate polytope.
    Because this enumeration is subject to extra combinatorial
    constraints, the only practical algorithms to date
    have been variants of the classical double description method.
    In this paper we present the first practical normal surface enumeration
    algorithm that breaks out of the double description paradigm.
    This new algorithm is based on a tree traversal with feasibility
    and domination tests,
    and it enjoys a number of advantages over the double description method:
    incremental output, significantly lower time and space complexity,
    and a natural suitability for parallelisation.
    Experimental comparisons of running times are included.

    \medskip
    \noindent \textbf{AMS Classification}\quad
    Primary
    57N10, 
    52B55; 
    Secondary
    90C05, 
    57N35  

    \medskip
    \noindent \textbf{Keywords}\quad
    Normal surfaces, vertex enumeration, tree traversal,
    backtracking, linear programming
\end{abstract}

%
%

\section{Introduction}

Much research in low-dimensional topology has been driven by
decision problems.  Examples from knot theory include
\emph{unknot recognition} (given a polygonal representation of a knot,
decide whether it is equivalent to a trivial unknotted loop), and the more
general problem of
\emph{knot equivalence} (given two polygonal representations of knots,
decide whether they are equivalent to each other).
Analogous examples from 3-manifold
topology include \emph{3-sphere recognition} (given a triangulation of a
3-manifold, decide whether this 3-manifold is the 3-dimensional sphere),
and the more general \emph{homeomorphism problem} (given two triangulations
of 3-manifolds, decide whether these 3-manifolds are homeomorphic, i.e.,
``topologically equivalent'').

Significant progress has been made on these problems over the past 50 years.
For instance, Haken introduced an algorithm for recognising the unknot
in the 1960s \cite{haken61-knot}, and Rubinstein presented the first 3-sphere
recognition algorithm in the early 1990s \cite{rubinstein95-3sphere}.
Since Perelman's recent proof of the geometrisation conjecture
\cite{kleiner08-perelman}, we now have a complete (but extremely complex)
algorithm for the homeomorphism problem,
pieced together though a diverse array of techniques by many different
authors \cite{jaco05-lectures-homeomorphism,matveev03-algms}.

A key problem remains with many 3-dimensional decision algorithms
(including all of those mentioned above): the best known algorithms run
in exponential time (and for some problems, worse), where the input size is
measured by the number of crossings in a knot diagram, or the number of
tetrahedra in a 3-manifold triangulation.  This severely limits the
practicality of such algorithms.  For instance, it took around 30~years
to resolve Thurston's well-known conjecture regarding the Weber-Seifert
space \cite{birman80-problems}---although an algorithmic solution was
known in the 1980s \cite{jaco84-haken}, it took another 25~years of
development before the necessary computations
could be performed \cite{burton12-ws}.  Here the input size was $n=23$.

The most successful machinery for 3-dimensional decision problems has
been \emph{normal surface theory}.  Normal surfaces were introduced by
Kneser \cite{kneser29-normal} and later developed by Haken for algorithmic use
\cite{haken61-knot,haken62-homeomorphism}, and they play a key role
in all of the decision algorithms mentioned above.
In essence, they allow us to convert difficult ``topological searches''
into a ``combinatorial search'' in which we enumerate vertices
of a high-dimensional polytope called the \emph{projective solution space}.%
\footnote{For some topological algorithms, vertex enumeration is not
enough: instead we must enumerate a Hilbert basis for a polyhedral cone.}
In Section~\ref{s-prelim} we outline the combinatorial aspects of
the projective solution space;
see \cite{hass99-knotnp} for a broader introduction to normal surface
theory in a topological setting.

For many topological decision problems, the computational bottleneck is
precisely the vertex enumeration described above.  Vertex enumeration over
a polytope is a well-studied problem, and several families of algorithms appear
in the literature.  These include
\emph{backtracking algorithms} \cite{balinski61-backtrack,fukuda97-analysis},
pivot-based \emph{reverse search algorithms}
\cite{avis00-revised,avis92-pivot,dyer83-complexity},
and the inductive \emph{double description method} \cite{motzkin53-dd}.
All have worst-case running times that are
super-polynomial in both the input \emph{and} the output size, and it is not
yet known whether a polynomial time algorithm (with respect to the combined
input and output size) exists.%
\footnote{Efficient algorithms do exist for certain classes of polytopes.
For example, in the case of non-degenerate polytopes, the reverse search
algorithm of Avis and Fukuda has virtually no space requirements beyond
storing the input, and has a running time polynomial in the combined input
and output size \cite{avis92-pivot}.}
For topological problems, the corresponding running times are
exponential in the ``topological input size''; that is, the number of
crossings or tetrahedra.

Normal surface theory poses particular challenges
for vertex enumeration:
\begin{itemize}
    \item The projective solution space is a highly degenerate polytope
    (that is, a $d$-dimensional polytope in which vertices typically
    belong to significantly more than $d$ facets).
    \item Exact arithmetic is required.
    This is because each vertex of the projective
    solution space is a rational vector that effectively ``encodes'' a
    topological surface, and topological decision algorithms require us
    to scale each vertex to its smallest integer multiple in order to
    extract the relevant topological information.
    \item We are not interested in \emph{all} of the vertices of the
    projective solution space, but just those that satisfy a powerful
    set of conditions called the \emph{quadrilateral constraints}.
    These are extra combinatorial constraints that eliminate all but a small
    fraction of the polytope,
    and an effective enumeration algorithm should be able to enforce
    them as it goes, not simply use them as a post-processing output filter.
\end{itemize}

All well-known implementations of normal surface enumeration
\cite{burton04-regina,fxrays}
are based on the double description method: degeneracy and exact
arithmetic do not pose any difficulties, and it is simple to embed the
quadrilateral constraints directly into the algorithm \cite{burton10-dd}.
Section~\ref{s-prelim} includes a discussion of why backtracking and
reverse search algorithms have not been used to date.
Nevertheless, the double description method does suffer from
significant drawbacks:
\begin{itemize}
    \item The double description method can suffer from a
    \emph{combinatorial explosion}: even if both the input and output
    sizes are small, it can create
    extremely complex intermediate polytopes where the number of
    vertices is super-polynomial in both the input and output sizes.
    This in turn leads to an unwelcome explosion in both time and memory
    use, a particularly frustrating situation for normal surface
    enumeration where the output size is often small (though still
    exponential in general) \cite{burton10-complexity}.
    \item It is difficult to parallelise the double description method,
    due to its inductive nature.  There have been recent efforts in this
    direction \cite{terzer10-parallel},
    though they rely on a shared memory model, and for some
    polytopes the speed-up factor plateaus when the number of processors
    grows large.
    \item Because of its inductive nature, the double description method
    typically does not output \emph{any} vertices until the entire
    algorithm is complete.  This effectively removes the possibility
    of early termination (which is desirable for many topological
    algorithms), and it further impedes any attempts at parallelisation.
\end{itemize}

In this paper we present a new algorithm for normal surface enumeration.
This algorithm belongs to the backtracking family, and is described in
detail in Section~\ref{s-algm}.  Globally, the algorithm is structured
as a tree traversal, where the underlying tree is a decision tree
indicating which coordinates are zero and which are non-zero.  Locally,
we move through the tree by performing incremental feasibility tests
using the dual simplex method (though interior point methods could equally
well be used).

Fukuda et~al.\ \cite{fukuda97-analysis} discuss an impediment to
backtracking algorithms, which is the need to solve the NP-complete
\emph{restricted vertex problem}.  We circumvent this by
ordering the tree traversal in a careful way and introducing
\emph{domination tests} over the set of previously-found vertices.
This introduces super-polynomial time and space trade-offs, but for
normal surface enumeration these trade-offs are typically not
severe (we quantify this both theoretically and empirically within
Sections~\ref{s-complexity} and~\ref{s-perf} respectively).

Our algorithm is well-suited to normal surface enumeration.
It integrates the quadrilateral constraints seamlessly into the tree
structure.  Moreover, we are often able to perform exact arithmetic
using native machine integer types (instead of arbitrary precision
integers, which are significantly slower).  We do this by exploiting the
sparseness of the equations that define the projective solution space,
and thereby bounding the integers that appear in intermediate
calculations.  The details appear in Section~\ref{s-bounds}.

Most significantly, this new algorithm is the first normal surface
enumeration algorithm to break out of the double description paradigm.
Furthermore, it enjoys a number of advantages over previous algorithms:
\begin{itemize}
    \item The theoretical worst-case bounds on both time and space
    complexity are significantly better for the new algorithm.
    In particular, the space complexity is a small polynomial in
    the combined input and output size.  See Section~\ref{s-complexity}
    for details.
    \item The new algorithm lends itself well to parallelisation,
    including both shared memory models and distributed processing,
    with minimal need for inter-communication and synchronisation.
    \item The new algorithm produces incremental output, which makes it
    well-suited for early termination and further parallelisation.
    \item The tree traversal supports a natural sense of ``progress
    tracking'', so that users can gain a rough sense for how far they are
    through the enumeration procedure.
\end{itemize}

In Section~\ref{s-perf} we measure the performance of the new algorithm
against the prior state of the art, using a rich test suite with
hundreds of problems that span a wide range of difficulties.
For simple problems we find the new algorithm slower, but as the
problems become more difficult the new algorithm quickly outperforms the
old, often running orders of magnitude faster.
We conclude that for difficult problems this new tree traversal algorithm
is superior, owing to both its stronger practical performance and its
desirable theoretical attributes as outlined above.

\section{Preliminaries} \label{s-prelim}

For decision problems based on normal surface theory, the input is typically a
\emph{3-manifold triangulation}.  This is a collection of $n$ tetrahedra
where some or all of the $4n$ faces are affinely identified (or ``glued
together'') in pairs, so that the resulting topological space is a
3-manifold (possibly with boundary).
The \emph{size of the input} is measured by the number
of tetrahedra, and we refer to this quantity as $n$ throughout this paper.

We allow two faces of the same tetrahedron to be identified together;
likewise, we allow different edges or vertices of the same tetrahedron
to be identified.  Some authors refer to such triangulations as
\emph{generalised triangulations}.
In essence, we allow the tetrahedra to be ``bent'' or
``twisted'', instead of insisting that they be rigidly embedded in some
larger space.  This more flexible definition allows us to keep the input
size $n$ small, which is important when dealing with exponential algorithms.

\begin{figure}[htb]
    \centering
    \includegraphics[scale=0.5]{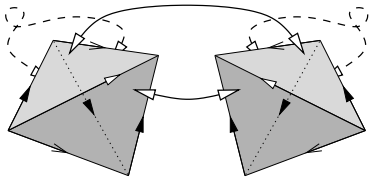}
    \caption{A 3-manifold triangulation of size $n=2$}
    \label{fig-s2xs1}
\end{figure}

Figure~\ref{fig-s2xs1} illustrates a triangulation
with $n=2$ tetrahedra: the back two faces of each
tetrahedron are identified with a twist, and the front two faces of the
first tetrahedron are identified with the front two faces of the second
tetrahedron.  As a consequence, all eight tetrahedron vertices become
identified together, and the edges become identified
into three equivalence classes as indicated by the three different arrowheads.
We say that the overall triangulation has \emph{one vertex} and
\emph{three edges}.
The underlying 3-manifold represented by this triangulation is the
product space $S^2 \times S^1$.

A \emph{closed} triangulation is one in which
all $4n$ tetrahedron faces are identified in $2n$ pairs (as in the
example above).
A \emph{bounded} triangulation is one in which some of the $4n$ tetrahedron
faces are left unidentified; together these unidentified faces form the
\emph{boundary} of the triangulation.  The underlying topological spaces
for closed and bounded triangulations are closed and bounded 3-manifolds
respectively.

There are of course other ways of representing a 3-manifold (for
instance, Heegaard splittings or Dehn surgeries).  We use triangulations
because they are ``universal'': it is typically easy to convert some
other representation into a triangulation, whereas it can sometimes be
difficult to move in the other direction.  In particular, when we are
solving problems in knot theory, we typically work with a triangulation
of the \emph{knot complement}---that is, the 3-dimensional space
surrounding the knot.

In normal surface theory, interesting surfaces within a 3-manifold
triangulation can be encoded as integer points in $\R^{3n}$.
Each such point $\mathbf{x} \in \R^{3n}$ satisfies the following
conditions:
\begin{itemize}
    \item $\mathbf{x} \geq \mathbf{0}$ and $M \mathbf{x} = \mathbf{0}$,
    where $M$ is a matrix of \emph{matching equations} that depends on
    the input triangulation;
    \item $\mathbf{x}$ satisfies the \emph{quadrilateral constraints}:
    for each triple of coordinates $(x_1,x_2,x_3)$, $(x_4,x_5,x_6)$, \ldots,
    $(x_{3n-2},x_{3n-1},x_{3n})$, at most one of the three entries
    in the triple can be non-zero.  There are $n$ such triples, giving
    $n$ such restrictions in total.
\end{itemize}
Any point $\mathbf{x} \in \R^{3n}$ that satisfies all of these
conditions (that is, $\mathbf{x} \geq \mathbf{0}$, $M \mathbf{x} = \mathbf{0}$
and the quadrilateral constraints) is called \emph{admissible}.

It should be noted that normal surface theory is often described using a
different coordinate system ($\R^{7n}$, also known as \emph{standard
coordinates}).
Our coordinates in $\R^{3n}$ are known as \emph{quadrilateral coordinates},
and were introduced by Tollefson \cite{tollefson98-quadspace}.
They are ideal for computation
because they are smaller and faster, and because information that
is ``lost'' from standard coordinates is typically quick to reconstruct
\cite{burton09-convert}.

These coordinates have a natural geometric interpretation, which we
outline briefly.
Typically one can arrange for an interesting surface within the 3-manifold to
intersect each tetrahedron of the triangulation in a collection of
disjoint triangles and\,/\,or quadrilaterals, as illustrated in
Figure~\ref{fig-normaldiscs}.  The corresponding admissible integer point in
$\R^{3n}$ counts how many quadrilaterals slice through each tetrahedron
in each of three possible directions.  From this information, the
locations of the triangles can also be reconstructed, under some weak
assumptions about the surface.

\begin{figure}[htb]
    \centering
    \includegraphics[scale=0.5]{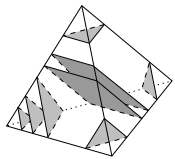}
    \caption{Triangles and quadrilaterals in a tetrahedron}
    \label{fig-normaldiscs}
\end{figure}

The matching equations ensure that triangles and quadrilaterals in
adjacent tetrahedra can be joined together, and the quadrilateral constraints
ensure that the resulting surface does not intersect itself.
We do not use this geometric interpretation in this paper,
and we do not discuss it further; for details the
reader is referred to \cite{hass99-knotnp}.

We recount some well-known facts about the matching equations
\cite{burton09-convert,tillmann08-finite,tollefson98-quadspace}:
\begin{lemma}
    Let $e$ and $v$ be the number of edges and vertices of the
    triangulation that do not lie within the boundary.
    Then there are $e$ matching equations; that is, $M$ is an
    $e \times 3n$ matrix.  In the case of a closed triangulation,
    we have $e=n+v > n$.

    In general one or more matching equations may be redundant.
    In the case of a closed triangulation, the rank of $M$ is precisely $n$.
\end{lemma}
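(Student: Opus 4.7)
The plan is to handle the three assertions in turn. The first, that $M$ has exactly $e$ rows, is immediate from the construction of the matching equations: by definition one equation is generated per non-boundary edge, expressing that the quadrilateral counts agree across the tetrahedra meeting that edge.

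For the identity $e = n+v$ in the closed case, I would argue via the Euler characteristic. A closed 3-manifold satisfies $\chi = 0$, and a closed triangulation with $n$ tetrahedra contributes $n$ tetrahedra, $2n$ faces (the $4n$ tetrahedron faces are identified in pairs), $e$ edges, and $v$ vertices, so
\[
0 \;=\; \chi \;=\; v - e + 2n - n,
\]
which gives $e = n+v$. Since every non-empty triangulation has at least one vertex, $v \geq 1$ and hence $e > n$.

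The substantive claim is that $\rank(M) = n$ for closed triangulations, which I would establish by matching upper and lower bounds. For the upper bound $\rank(M) \leq n$, I would exhibit $v$ independent linear relations among the $e$ rows of $M$, one associated with each vertex $\nu$ of the triangulation. The relation at $\nu$ is a signed sum of the matching equations at the edges incident to $\nu$ (with loops counted with multiplicity two), and it vanishes identically as a linear functional on $\R^{3n}$ because the link of $\nu$ in a closed 3-manifold triangulation is a $2$-sphere: its induced triangulation by vertex-linking triangles obeys a single global combinatorial identity that pushes down, under the passage to quadrilateral coordinates, to the claimed row relation. Independence across the $v$ vertices reduces to a direct check on the vertex-edge incidence structure, after which $\rank(M) \leq e - v = n$.

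The main obstacle is the reverse inequality $\rank(M) \geq n$, equivalently $\dim\ker(M) \leq 2n$. My plan here is to invoke Tollefson's parametrisation~\cite{tollefson98-quadspace} of the quadrilateral solution set, which bounds the dimension of $\ker(M)$ directly. As a concrete alternative, one can order the tetrahedra $\tau_1,\ldots,\tau_n$ and, for each $i$, select a quadrilateral coordinate of $\tau_i$ together with an incident edge $\epsilon_i$ so that, after reordering, the corresponding $n\times n$ submatrix of $M$ is triangular with nonzero diagonal; the subtlety is that self-identifications among faces can force two chosen coordinates to interact within a single matching equation, so the selection must be made with care. Either route, combined with the upper bound, yields $\rank(M) = n$ as required.
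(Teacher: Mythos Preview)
The paper does not actually prove this lemma: it is stated as a ``well-known fact'' with citations to \cite{burton09-convert,tillmann08-finite,tollefson98-quadspace}, and no argument is given in the text. So there is no in-paper proof to compare your proposal against.

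That said, your outline is broadly the standard one and is essentially correct. The Euler characteristic computation for $e = n+v$ is exactly right, and the row count is immediate from the construction. For the rank statement, your strategy of matching upper and lower bounds is the natural route. The upper bound via one linear relation per vertex is the correct idea, and the fact that the vertex link is a closed surface (a $2$-sphere here) is indeed what drives it; however, the passage from ``the link is a sphere'' to ``this signed sum of edge equations vanishes identically on $\R^{3n}$'' in quadrilateral coordinates is more delicate than your sketch suggests, and the independence of the $v$ relations also needs a genuine argument rather than a wave at the incidence structure. For the lower bound, citing Tollefson is reasonable given that the paper itself is content to cite the result, but note that Tollefson's parametrisation in fact pins down $\dim\ker M$ exactly, so it would supply both inequalities at once and make the vertex-relation half redundant. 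Your alternative triangular-submatrix construction is plausible but, as you acknowledge, the self-identifications make the edge selection genuinely subtle; this is where a full proof would need the most work.
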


\begin{lemma} \label{l-matching-sparse}
    The matrix $M$ is sparse with small integer entries.
    Each column contains at most four non-zero entries, each of which is
    $\pm1$, $\pm2$, $\pm3$ or $\pm4$.  Moreover, the sum of absolute values
    in each column is at most $4$; that is, $\sum_i |M_{i,c}| \leq 4$
    for all $c$.  If the triangulation is orientable, the only possible
    non-zero entries are $\pm1$ and $\pm2$.
\end{lemma}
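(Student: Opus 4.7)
My plan is to read the claimed bounds directly off the geometric derivation of the matching equations. A column of $M$ is naturally indexed by a pair $(T, q)$ where $T$ is a tetrahedron of the triangulation and $q$ is one of the three quadrilateral types in $T$; a row is indexed by an interior edge $e$ of the triangulation.

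First I would recover the matching equation at $e$ from standard ($\R^{7n}$) coordinates by walking around $e$ through the cyclic sequence of tetrahedra meeting it, and summing, across each face of the walk, the standard-coordinate matching equation for the normal arc at the vertex of $e$ on that face. The triangle coordinates telescope out, leaving an equation purely in quadrilateral coordinates. A short check shows that in each tetrahedron $T'$ of the walk, exactly two of the three quadrilateral types of $T'$ survive---namely the two whose quadrilateral crosses the edge of $T'$ identified with $e$---and each surviving type appears with coefficient $\pm 1$. Equivalently, $q$ in $T$ contributes $\pm 1$ to the matching equation at the edge class $[f]$ exactly when $f$ is one of the four edges of $T$ on which the quadrilateral of type $q$ has a vertex.

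Since $q$ has exactly four vertices, one on each of four edges $f_1, f_2, f_3, f_4$ of $T$, the column $(T,q)$ of $M$ is built by distributing four signed $\pm 1$ contributions among the rows $[f_1], \ldots, [f_4]$. All three bounds in the first half of the lemma then follow by counting: there are at most four distinct nonzero rows; every entry is a sum of between one and four $\pm 1$ terms and so lies in $\{\pm 1, \pm 2, \pm 3, \pm 4\}$; and $\sum_i |M_{i,(T,q)}| \le 4$ by the triangle inequality applied edge-class by edge-class.

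For the orientable case I would use a coherent orientation of the triangulation to pair up the signs. The four crossed edges of $q$ split naturally into two pairs of \emph{opposite} vertices of the quadrilateral, and these are precisely the two pairs of opposite edges of $T$ other than $q$'s avoided pair. I expect that a coherent orientation forces the $\pm 1$ contributions of any two opposite vertices to occur with opposite signs, so that whenever both edges of such a pair fall into the same row they cancel; with only two pairs available this caps each entry at magnitude $2$. The hard part will be exactly this sign bookkeeping---tracing how the direction of the walk around $e$ interacts with the induced orientation on $T$ to pin down the ``in'' versus ``out'' signs at opposite vertices---while the remainder of the lemma is essentially a counting argument.
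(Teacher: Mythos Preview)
The paper does not actually prove this lemma: it is presented as one of several ``well-known facts about the matching equations'' with references to Tollefson, Tillmann, and Burton, and no argument is given. So there is no in-paper proof to compare against; your proposal supplies what the paper omits.

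Your derivation of the quadrilateral matching equation at an edge $e$---summing the standard face equations around $e$ so that triangle coordinates telescope---is the standard construction, and your counting for the general bounds is correct: the four $\pm 1$ contributions from the four edges of $T$ crossed by $q$ give at most four nonzero rows, entries in $\{\pm 1,\pm 2,\pm 3,\pm 4\}$, and column $\ell^1$-norm at most $4$.

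For the orientable refinement your strategy is sound, but be cautious with the specific pairing claim. What a coherent orientation actually buys you is that the four $\pm 1$ contributions in each column are exactly two $+1$'s and two $-1$'s (this drops out of the induced orientation on the edge links in $T$). That balanced-sign fact alone already suffices, since any subset of $\{+1,+1,-1,-1\}$ has sum of absolute value at most $2$ regardless of how the four edges are identified in the triangulation. Whether the two $+1$'s sit at \emph{opposite} or at \emph{adjacent} vertices of the quadrilateral depends on sign conventions, and you may find when you carry out the bookkeeping you flag that your expectation needs adjusting; aim instead for the weaker balanced-sign statement, which is both easier to verify and all you need.
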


We define the \emph{solution cone} $\qcone$ to be the set
\[ \qcone = \left\{\left. \mathbf{x} \in \R^{3n}\,\right|\,
    \mathbf{x} \geq \mathbf{0}\ \mathrm{and}
    \ M\mathbf{x} = \mathbf{0}\right\}.
\]
This is a polyhedral cone with apex at the origin.
From above, it follows that interesting surfaces within
our 3-manifold are encoded as integer points within the solution cone.
The \emph{projective solution space} $\qproj$ is defined to be a
cross-section of this cone:
\[ \qproj = \left\{ \mathbf{x} \in \qcone\,\left|
    \,\sum x_i = 1\right.\right\}.
\]
It follows that the projective solution space is a (bounded) polytope
with rational vertices.
In general, both the solution cone and the projective solution space
are highly degenerate (that is, extreme rays of $\qcone$
and vertices of $\qproj$ often belong to many more facets than required
by the dimension).

For polytopes and polyhedra, we follow the terminology of Ziegler
\cite{ziegler95}: polytopes are always bounded, polyhedra may be
either bounded or unbounded, and all polytopes and polyhedra are
assumed to be convex.

It is important to note that the quadrilateral constraints do not
feature in the definitions of $\qcone$ or $\qproj$.  These constraints
are combinatorial in nature, and they cause significant theoretical
problems: they are neither linear nor convex, and the region of
$\qproj$ that obeys them can be disconnected or can have non-trivial
topology (such as ``holes'').
Nevertheless, they are
extremely powerful: although the vertices of $\qproj$ can be numerous,
typically just a tiny fraction of these vertices satisfy the
quadrilateral constraints
\cite{burton10-complexity,burton11-asymptotic}.

For many topological decision problems that use normal surface theory, a
typical algorithm runs as follows:
\begin{enumerate}
    \item Enumerate all admissible vertices of the projective solution space
    $\qproj$ (that is, all vertices of $\qproj$ that satisfy the quadrilateral
    constraints);
    \item Scale each vertex to its smallest integer multiple,
    ``decode'' it to obtain a surface within the underlying 3-manifold
    triangulation, and test whether this surface is ``interesting'';
    \item Terminate the algorithm once an interesting surface is found,
    or once all vertices are exhausted.
\end{enumerate}
The definition of ``interesting'' varies for different decision problems.
For more of the overall topological context see \cite{hass99-knotnp},
and for more details on quadrilateral coordinates and the projective solution
space see \cite{burton09-convert}.

As indicated in the introduction, step~1 
(the enumeration of vertices) is typically the computational bottleneck.
Because so few vertices are admissible, it is
crucial for normal surface enumeration algorithms to enforce the quadrilateral
constraints as they go---one cannot afford to enumerate the entire vertex
set of $\qproj$ (which is generally orders of magnitude larger)
and then enforce the quadrilateral constraints afterwards.

Traditionally, normal surface enumeration algorithms are based on the
double description method of Motzkin et~al.\ \cite{motzkin53-dd},
with additional algorithmic improvements specific to normal surface theory
\cite{burton10-dd}.  In brief, we construct a sequence of polytopes
$P_0,P_1,\ldots,P_e$, where $P_0$ is the unit
simplex in $\R^{3n}$, $P_e$ is the final solution space $\qproj$, and
each intermediate polytope $P_i$ is the intersection of the unit simplex
with the first $i$ matching equations.  The algorithm works inductively,
constructing $P_{i+1}$ from $P_i$ at each stage.  For each intermediate
polytope $P_i$ we throw away any vertices that do not satisfy the
quadrilateral constraints, and it can be shown inductively that this
yields the correct final solution set.

It is worth noting why backtracking and reverse search algorithms
have not been used for normal surface enumeration to date:
\begin{itemize}
    \item Reverse search algorithms map out vertices by pivoting along
    edges of the polytope.  This makes it difficult to incorporate the
    quadrilateral constraints: since the region that satisfies these
    constraints can be disconnected, we may need to pivot through
    vertices that \emph{break} these constraints in order to map out all
    solutions.
    Degeneracy can also cause significant performance problems for reverse
    search algorithms \cite{avis00-revised,avis97-howgood-compgeom}.
    \item Backtracking algorithms receive comparatively little attention
    in the literature.  Fukuda et~al.\ \cite{fukuda97-analysis}
    show that backtracking can solve the \emph{face} enumeration problem
    in polynomial time.  However, for vertex enumeration their
    framework requires a solution to the NP-complete \emph{restricted
    vertex problem}, and they conclude that straightforward backtracking
    is unlikely to work efficiently for vertex enumeration in general.
\end{itemize}

Our new algorithm belongs to the backtracking family.
Despite the concerns of Fukuda et~al., we find in
Section~\ref{s-perf} that for large problems our algorithm is a
significant improvement over the prior state of the art, often running
orders of magnitude faster.
We achieve this by introducing \emph{domination tests}, which are
time and space trade-offs that allow us to avoid solving the
restricted vertex problem directly.
The full algorithm is given in Section~\ref{s-algm} below, and
in Sections~\ref{s-complexity} and~\ref{s-perf} we study
these trade-offs both theoretically and experimentally.

\section{The tree traversal algorithm} \label{s-algm}

The basic idea behind the algorithm is to construct admissible vertices
$\mathbf{x} \in \qproj$ by iterating through all possible
combinations of which coordinates
$x_i$ are zero and which coordinates $x_i$ are non-zero.
We arrange these combinations into a search tree of height $n$,
which we traverse in a depth-first manner.
Using a combination of feasibility tests and domination tests, we are
able to prune this search tree so that the traversal is more efficient,
and so that the leaves at depth $n$
correspond precisely to the admissible vertices of $\qproj$.

Because the quadrilateral constraints are expressed purely in terms
of zero versus non-zero coordinates, we can easily build the quadrilateral
constraints directly into the structure of the search tree.
We do this with the help of \emph{type vectors}, which we introduce
in Section~\ref{s-algm-type}.  In Section~\ref{s-algm-main} we describe
the search tree and present the overall structure of the
algorithm, and we follow in Sections~\ref{s-algm-domination}
and~\ref{s-algm-feasibility} with details on some of the more complex steps.

\subsection{Type vectors} \label{s-algm-type}

Recall the quadrilateral constraints, which state that
for each $i=1,\ldots,n$, at most one of the three coordinates
$x_{3i-2},x_{3i-1},x_{3i}$ can be non-zero.  A \emph{type vector}
is a sequence of $n$ symbols that indicates
how these constraints are resolved for each $i$.  In detail:

\begin{defn}[Type vector] \label{d-type}
    Let $\mathbf{x} = (x_1,\ldots,x_{3n}) \in \R^{3n}$ be any vector
    that satisfies the quadrilateral constraints.
    We define the \emph{type vector} of $\mathbf{x}$ to be
    $\tau(\mathbf{x}) = (\tau_1,\ldots,\tau_n) \in \{0,1,2,3\}^n$,
    where
    \begin{equation} \label{eqn-type}
    \tau_i = \left\{\begin{array}{ll}
        0 & \mbox{if $x_{3i-2} = x_{3i-1} = x_{3i} = 0$}; \\
        1 & \mbox{if $x_{3i-2} \neq 0$ and $x_{3i-1} = x_{3i} = 0$}; \\
        2 & \mbox{if $x_{3i-1} \neq 0$ and $x_{3i-2} = x_{3i} = 0$}; \\
        3 & \mbox{if $x_{3i} \neq 0$ and $x_{3i-2} = x_{3i-1} = 0$}. \\
    \end{array}\right.
    \end{equation}
\end{defn}

For example, consider the vector
$\mathbf{x} = (0,1,0,\ 0,0,4,\ 0,0,0,\ 0,0,2) \in \R^{12}$
where $n=4$.
Here the type vector of $\mathbf{x}$ is $\tau(\mathbf{x}) = (2,3,0,3)$.

An important feature of type vectors is that they carry enough
information to completely reconstruct any vertex of the projective
solution space, as shown by the following result.

\begin{lemma} \label{l-type-reconstruct}
    Let $\mathbf{x}$ be any admissible vertex of $\qproj$.
    Then the precise coordinates $x_1,\ldots,x_{3n}$ 
    can be recovered from the type vector $\tau(\mathbf{x})$
    by solving the following simultaneous equations:
    \begin{itemize}
        \item the matching equations $M \mathbf{x} = \mathbf{0}$;
        \item the projective equation $\sum x_i = 1$;
        \item the equations of the form $x_j=0$ as dictated by
        (\ref{eqn-type}) above, according to the particular
        value (0, 1, 2 or 3) of each entry in the type vector
        $\tau(\mathbf{x})$.
    \end{itemize}
\end{lemma}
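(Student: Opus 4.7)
The plan is to reduce the lemma to the standard linear-algebra characterization of a polytope vertex: a point $\mathbf{x}$ of a polytope $P = \{\mathbf{y} : A\mathbf{y} = \mathbf{b},\ \mathbf{y} \geq \mathbf{0}\}$ is a vertex if and only if it is the unique solution of the linear system obtained by taking $A\mathbf{y} = \mathbf{b}$ together with $y_j = 0$ for every $j$ with $x_j = 0$. Applied to $\qproj$, whose defining data are the matching equations $M\mathbf{y} = \mathbf{0}$, the projective equation $\sum y_i = 1$, and the non-negativity constraints $\mathbf{y} \geq \mathbf{0}$, this is exactly the conclusion of the lemma, provided one verifies that the zero pattern of $\mathbf{x}$ is exactly the information read off from $\tau(\mathbf{x})$.

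The first substantive step, then, is to unpack Definition~\ref{d-type} in the presence of the quadrilateral constraints: because $\mathbf{x}$ is admissible, at most one entry of each triple $(x_{3i-2}, x_{3i-1}, x_{3i})$ is non-zero, so the value $\tau_i \in \{0,1,2,3\}$ pins down for every $j \in \{3i-2, 3i-1, 3i\}$ whether $x_j = 0$ or $x_j$ is free to be non-zero. Consequently the family of equations $x_j = 0$ listed in the statement is precisely the set of tight non-negativity constraints of $\qproj$ at $\mathbf{x}$, which I denote $Z$. The second step is then immediate: by the vertex characterization just recalled, the combined system $M\mathbf{x} = \mathbf{0}$, $\sum x_i = 1$, and $x_j = 0$ for $j \in Z$ admits $\mathbf{x}$ as its unique solution, and solving it therefore recovers the coordinates of $\mathbf{x}$ exactly.

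The main obstacle is really just the bookkeeping of the first step---convincing oneself that $\tau(\mathbf{x})$ conveys \emph{all} of the information about the support of $\mathbf{x}$, not merely a subset. Once that is granted, no further computation is required: the vertex property of $\mathbf{x}$ does all the heavy lifting, and uniqueness of the reconstructed solution follows because any two admissible vertices with a common type vector would both solve the same square-or-overdetermined linear system, contradicting vertex uniqueness.
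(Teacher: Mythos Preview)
Your argument is correct and follows essentially the same approach as the paper: the paper's proof is a one-sentence sketch observing that the type vector records which facets of $\qproj$ contain $\mathbf{x}$, and that a vertex is recovered as the intersection of its facets---precisely your ``tight non-negativity constraints plus defining equations'' characterisation, spelled out in more detail. The only extra ingredient you make explicit, namely that admissibility guarantees $\tau(\mathbf{x})$ captures the \emph{entire} zero pattern of $\mathbf{x}$, is left implicit in the paper's sketch but is indeed the point that needs checking.
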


\begin{proof}
    This result simply translates Lemma~4.4 of \cite{burton10-dd} into the
    language of type vectors.  However, it is also a simple consequence
    of polytope theory: the type vector indicates which facets of
    $\qproj$ the point $\mathbf{x}$ belongs to, and any vertex of a
    polytope can be reconstructed as the intersection of those
    facets to which it belongs.  See \cite{burton10-dd} for further details.
\end{proof}

Note that this full recovery of $\mathbf{x}$ from $\tau(\mathbf{x})$
is only possible for \emph{vertices} of $\qproj$, not arbitrary points
of $\qproj$.  In general, the type vector $\tau(\mathbf{x})$ carries
only enough information for us to determine the smallest face of $\qproj$
to which $\mathbf{x}$ belongs.

However, type vectors do carry enough information for us to
identify \emph{which} admissible points $\mathbf{x} \in \qproj$ are
vertices of $\qproj$.
For this we introduce the concept of domination.

\begin{defn}[Domination]
    Let $\tau, \sigma \in \{0,1,2,3\}^n$ be type vectors.
    We say that $\tau$ \emph{dominates} $\sigma$ if,
    for each $i=1,\ldots,n$, either $\sigma_i=\tau_i$ or $\sigma_i=0$.
    We write this as $\tau \geq \sigma$.
\end{defn}

For example, if $n=4$ then
$(1,0,2,3) \geq (1,0,2,0) \geq (1,0,2,0) \geq (1,0,0,0)$.
On the other hand, neither of $(1,0,2,0)$ or $(1,0,3,0)$ dominates the other.
It is clear that in general, domination imposes a partial order
(but not a total order) on type vectors.

\begin{remark}
    We use the word ``domination'' because domination of type vectors
    corresponds precisely to domination in the face lattice of the polytope
    $\qproj$.
    For any two admissible points $\mathbf{x},\mathbf{y} \in \R^{3n}$,
    we see from
    Definition~\ref{d-type} that $\tau(\mathbf{x}) \geq \tau(\mathbf{y})$
    if and only if, for every coordinate position $i$ where $x_i=0$,
    we also have $y_i=0$.  Phrased in the language of polytopes, this means
    that $\tau(\mathbf{x}) \geq \tau(\mathbf{y})$ if and only if
    every face of $\qproj$ containing $\mathbf{x}$ also contains $\mathbf{y}$.
    That is, $\tau(\mathbf{x}) \geq \tau(\mathbf{y})$ if and only if
    the smallest-dimensional face containing $\mathbf{y}$ is a \emph{subface}
    of the smallest-dimensional face containing $\mathbf{x}$.
\end{remark}

These properties allow us to characterise admissible vertices of the
projective solution space entirely in terms of type vectors,
as seen in the following result.

\begin{lemma} \label{l-type-dominate}
    Let $\mathbf{x} \in \qproj$ be admissible.  Then the
    following statements are equivalent:
    \begin{itemize}
        \item $\mathbf{x}$ is a vertex of $\qproj$;
        \item there is no admissible point $\mathbf{y} \in \qproj$ for which
        $\tau(\mathbf{x}) \geq \tau(\mathbf{y})$ and
        $\mathbf{x} \neq \mathbf{y}$;
        \item there is no admissible vertex $\mathbf{y}$ of $\qproj$ for which
        $\tau(\mathbf{x}) \geq \tau(\mathbf{y})$ and
        $\mathbf{x} \neq \mathbf{y}$.
    \end{itemize}
\end{lemma}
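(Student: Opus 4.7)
The plan is to prove the cyclic chain of implications (ii) $\Rightarrow$ (iii) $\Rightarrow$ (i) $\Rightarrow$ (ii). The first arrow is immediate: every admissible vertex is an admissible point, so if no admissible point strictly dominated by $\tau(\mathbf{x})$ exists, in particular no admissible vertex does. The real work lies in the remaining two implications.

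For (i) $\Rightarrow$ (ii), I would lean directly on Lemma~\ref{l-type-reconstruct}. Suppose $\mathbf{x}$ is an admissible vertex and $\mathbf{y} \in \qproj$ is admissible with $\tau(\mathbf{x}) \geq \tau(\mathbf{y})$. The reconstruction lemma says that $\mathbf{x}$ is the \emph{unique} solution of the matching equations, the projective equation, and the coordinate-zero equations dictated by $\tau(\mathbf{x})$ via Definition~\ref{d-type}. I would just check that $\mathbf{y}$ satisfies the same system: the matching and projective equations hold because $\mathbf{y} \in \qproj$, and the zero equations hold triple by triple, since domination forces, for each $i$, either $\tau_i(\mathbf{y}) = \tau_i(\mathbf{x})$ (same zero pattern in triple $i$) or $\tau_i(\mathbf{y}) = 0$ (a strictly larger zero pattern). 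Uniqueness then gives $\mathbf{y} = \mathbf{x}$.

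For (iii) $\Rightarrow$ (i) I would argue the contrapositive. If $\mathbf{x}$ is admissible but not a vertex, let $F$ be the smallest face of $\qproj$ containing $\mathbf{x}$, so $\dim F \geq 1$. Since facets of $\qproj$ arise from the tight inequalities $x_j \geq 0$, this face is exactly $F = \{\mathbf{z} \in \qproj : z_j = 0 \text{ whenever } x_j = 0\}$. Because $F$ is a polytope of positive dimension, it has at least two vertices, so I can pick a vertex $\mathbf{y}$ of $F$ with $\mathbf{y} \neq \mathbf{x}$; such a $\mathbf{y}$ is also a vertex of $\qproj$. Two things then need to be verified: first, $\mathbf{y}$ is admissible, which follows because $y_j = 0$ at every position where $x_j = 0$, so the quadrilateral constraints satisfied by $\mathbf{x}$ descend coordinate-wise to $\mathbf{y}$; second, $\tau(\mathbf{x}) \geq \tau(\mathbf{y})$, which is immediate from the same zero-containment and Definition~\ref{d-type}. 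This produces the admissible vertex violating (iii).

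The main obstacle is being careful in the (iii) $\Rightarrow$ (i) step about admissibility: a vertex of $F$ chosen blindly must still obey the quadrilateral constraints, and this is why I would first identify $F$ explicitly as the intersection of $\qproj$ with the coordinate-zero facets hit by $\mathbf{x}$ before extracting $\mathbf{y}$. Once that description is in place, all remaining checks reduce to unpacking Definition~\ref{d-type} and applying Lemma~\ref{l-type-reconstruct}.
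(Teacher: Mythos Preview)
Your proof is correct and matches the approach the paper alludes to: the paper does not give a self-contained proof but cites \cite{burton09-convert}, describing it only as ``elementary polytope theory with some minor complications due to the quadrilateral constraints,'' which is precisely the ground your argument covers. Your (i)\,$\Rightarrow$\,(ii) step via Lemma~\ref{l-type-reconstruct} and your (iii)\,$\Rightarrow$\,(i) step via the minimal-face description $F = \{\mathbf{z}\in\qproj : z_j = 0 \text{ whenever } x_j = 0\}$ are both sound, and your explicit verification that a vertex of $F$ inherits admissibility from $\mathbf{x}$ is exactly the ``minor complication'' the paper flags.
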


\begin{proof}
    This is essentially Lemma~4.3 of \cite{burton09-convert},
    whose proof involves elementary polytope theory with some minor
    complications due to the quadrilateral constraints.
    Here we merely reformulate this result in terms of type vectors.
    See \cite{burton09-convert} for further details.
\end{proof}

Because our algorithm involves backtracking, it spends much of its time
working with partially-constructed type vectors.  It is therefore useful to
formalise this notion.

\begin{defn}[Partial type vector]
    A \emph{partial type vector} is any vector $\tau = (\tau_1,\ldots,\tau_n)$,
    where each $\tau_i \in \{0,1,2,3,\unk\}$.
    We call the special symbol ``$\unk$'' the \emph{unknown symbol}.
    A partial type vector that does not contain any unknown symbols is
    also referred to as a \emph{complete type vector}.

    We say that two partial type vectors $\tau$ and $\sigma$ \emph{match}
    if, for each $i=1,\ldots,n$, either $\tau_i = \sigma_i$ or at least
    one of $\tau_i,\sigma_i$ is the unknown symbol.
\end{defn}

For example, $(1,\unk,2)$ and $(1,3,2)$ match,
and $(1,\unk,2)$ and $(1,3,\unk)$ also match.  However,
$(1,\unk,2)$ and $(0,\unk,2)$ do not match because their leading entries
conflict.  If $\tau$ and $\sigma$ are both complete type vectors, then
$\tau$ matches $\sigma$ if and only if $\tau = \sigma$.

\begin{defn}[Type constraints] \label{d-type-constraints}
    Let $\tau = (\tau_1,\ldots,\tau_n) \in \{0,1,2,3,\unk\}^n$ be a
    partial type vector.  The \emph{type constraints} for $\tau$ are a
    collection of equality constraints and non-strict inequality
    constraints on an arbitrary point
    $\mathbf{x} = (x_1,\ldots,x_{3n}) \in \R^{3n}$.
    For each $i=1,\ldots,n$, the type symbol $\tau_i$ contributes the
    following constraints to this collection:
    \[ \begin{array}{ll}
    x_{3i-2} = x_{3i-1} = x_{3i} = 0 &
        \mbox{if $\tau_i = 0$;} \\
    x_{3i-2} \geq 1\ \textrm{and}\ x_{3i-1} = x_{3i} = 0 &
        \mbox{if $\tau_i = 1$;} \\
    x_{3i-1} \geq 1\ \textrm{and}\ x_{3i-2} = x_{3i} = 0 &
        \mbox{if $\tau_i = 2$;} \\
    x_{3i} \geq 1\ \textrm{and}\ x_{3i-2} = x_{3i-1} = 0 &
        \mbox{if $\tau_i = 3$;} \\
    \textrm{no constraints} &
        \mbox{if $\tau_i = \unk$.}
    \end{array}\]
\end{defn}

Note that, unlike all of the other constraints seen so far, the type
constraints are not invariant under scaling.  The type constraints are
most useful in the solution cone $\qcone$, where any admissible
point $\mathbf{x} \in \qcone$ with $x_j > 0$ can be rescaled to some
admissible point $\lambda \mathbf{x} \in \qcone$ with $\lambda x_j \geq 1$.
We see this scaling behaviour in Lemma~\ref{l-type-constraints} below.

The type constraints are similar but not identical to the conditions
described by equation~(\ref{eqn-type}) for the type vector $\tau(\mathbf{x})$,
and their precise relationship is described by the following lemma.
The most important difference
is that (\ref{eqn-type}) uses strict inequalities of the form $x_j > 0$,
whereas the type constraints use non-strict inequalities of the form
$x_j \geq 1$.  This is because the type constraints will be used with
techniques from linear programming, where non-strict inequalities are
simpler to deal with.

\begin{lemma} \label{l-type-constraints}
    Let $\sigma \in \{0,1,2,3,\unk\}^n$ be any partial type vector.
    If $\mathbf{x} \in \R^{3n}$ satisfies the type constraints for
    $\sigma$, then the type vector $\tau(\mathbf{x})$ matches $\sigma$.
    Conversely, if $\mathbf{x} \in \R^{3n}$ is an admissible vector whose
    type vector $\tau(\mathbf{x})$ matches $\sigma$, then
    $\lambda \mathbf{x}$ satisfies the type constraints for $\sigma$ for
    some scalar $\lambda > 0$.
\end{lemma}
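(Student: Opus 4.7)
The plan is to verify each direction of the equivalence separately, doing a case analysis on the value of $\sigma_i$ at each coordinate position $i = 1, \ldots, n$.

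For the forward direction, suppose $\mathbf{x}$ satisfies the type constraints for $\sigma$. I need to show $\tau(\mathbf{x})_i$ matches $\sigma_i$ for each $i$, which amounts to showing $\tau(\mathbf{x})_i = \sigma_i$ whenever $\sigma_i \neq \unk$ (since $\tau(\mathbf{x})$ is complete by definition). The case $\sigma_i = 0$ is immediate: the type constraints force $x_{3i-2}=x_{3i-1}=x_{3i}=0$, so $\tau(\mathbf{x})_i = 0$ by equation~(\ref{eqn-type}). For $\sigma_i = k \in \{1,2,3\}$, the type constraints force the coordinate $x_{3i-3+k} \geq 1 > 0$ while the other two coordinates in the triple vanish, which matches exactly the definition of $\tau(\mathbf{x})_i = k$ in~(\ref{eqn-type}). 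The case $\sigma_i = \unk$ is trivial since anything matches $\unk$.

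For the converse, suppose $\mathbf{x}$ is admissible and $\tau(\mathbf{x})$ matches $\sigma$. The only obstacle to $\mathbf{x}$ itself satisfying the type constraints is the strict-versus-non-strict discrepancy: the matched entries tell us certain coordinates are strictly positive, but the type constraints demand those coordinates be $\geq 1$. I would handle this by scaling. Let $I$ be the set of indices $i$ with $\sigma_i \in \{1,2,3\}$; for each such $i$, matching forces $\tau(\mathbf{x})_i = \sigma_i$ (the $\unk$ alternative is unavailable for the complete vector $\tau(\mathbf{x})$), so the coordinate $x_{3i-3+\sigma_i}$ is nonzero, hence strictly positive by admissibility. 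If $I = \emptyset$, take $\lambda = 1$; otherwise set
\[
\lambda = \max\!\left(1,\ \frac{1}{\min_{i \in I} x_{3i-3+\sigma_i}}\right),
\]
which is a positive real number. Then $\lambda x_{3i-3+\sigma_i} \geq 1$ for every $i \in I$, while all equalities $x_j = 0$ dictated by $\sigma$ are preserved under scaling (here I use that when $\sigma_i = 0$, matching forces $\tau(\mathbf{x})_i = 0$ and hence the whole triple vanishes, and when $\sigma_i \in \{1,2,3\}$, the other two coordinates in the $i$-th triple vanish by~(\ref{eqn-type})). Positions with $\sigma_i = \unk$ impose no constraint at all. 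Thus $\lambda \mathbf{x}$ satisfies the type constraints for $\sigma$.

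There is no real obstacle here; the argument is a direct unpacking of Definition~\ref{d-type} and Definition~\ref{d-type-constraints}. The only subtlety worth flagging is precisely the point the lemma is designed to highlight: the type constraints are not scale-invariant, so the converse direction necessarily requires the scalar $\lambda$, whereas the forward direction does not.
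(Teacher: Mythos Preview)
Your proof is correct and follows exactly the approach the paper takes: the paper's own proof is a two-sentence sketch (``a simple exercise in matching Definitions~\ref{d-type} and~\ref{d-type-constraints}'', noting that admissibility gives $\mathbf{x}\geq\mathbf{0}$ so that $x_j\neq 0$ can be scaled to $\lambda x_j\geq 1$), and you have simply filled in the case analysis explicitly. There is no substantive difference in method.
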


\begin{proof}
    This is a simple exercise in matching Definitions~\ref{d-type}
    and~\ref{d-type-constraints}.
    Recall that admissible vectors $\mathbf{x}$ must satisfy
    $\mathbf{x} \geq \mathbf{0}$, which is why we are able to convert
    $x_j \neq 0$ into $\lambda x_j \geq 1$.
\end{proof}

We finish this section on type vectors with a simple but important note.

\begin{lemma} \label{l-type-nonzero}
    In the projective solution space, type vectors are always non-zero.
    That is, there is no $\mathbf{x} \in \qproj$
    for which $\tau(\mathbf{x}) = \mathbf{0}$.
\end{lemma}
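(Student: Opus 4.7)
The proof is essentially a one-line contradiction argument, so my plan is to unwind the definitions and point out the clash with the projective normalization. First I would suppose, for contradiction, that some $\mathbf{x} \in \qproj$ satisfies $\tau(\mathbf{x}) = \mathbf{0}$, i.e.\ $\tau_i = 0$ for every $i = 1, \ldots, n$.

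Next I would invoke Definition~\ref{d-type} directly: the case $\tau_i = 0$ forces $x_{3i-2} = x_{3i-1} = x_{3i} = 0$. Applying this for each $i$ from $1$ to $n$ exhausts all $3n$ coordinates, so $\mathbf{x} = \mathbf{0}$.

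Finally I would use the defining equation of $\qproj$, namely $\sum_i x_i = 1$, to derive the contradiction $0 = \sum_i x_i = 1$. This completes the proof.

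There is essentially no obstacle here; the statement is a direct syntactic consequence of Definition~\ref{d-type} together with the projective normalisation $\sum x_i = 1$ built into $\qproj$. The only thing worth being careful about is that $\tau(\mathbf{x})$ is only defined for vectors satisfying the quadrilateral constraints, but this is guaranteed implicitly by the hypothesis that $\tau(\mathbf{x})$ exists and equals $\mathbf{0}$.
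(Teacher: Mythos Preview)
Your proof is correct and follows exactly the same approach as the paper: use Definition~\ref{d-type} to conclude that $\tau(\mathbf{x})=\mathbf{0}$ forces $\mathbf{x}=\mathbf{0}$, and then note that this contradicts the projective normalisation $\sum x_i = 1$. The paper's version is simply more terse.
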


\begin{proof}
    If $\tau(\mathbf{x})=\mathbf{0}$ then $\mathbf{x}=\mathbf{0}$,
    by Definition~\ref{d-type}.  However, every point in the projective
    solution space lies in the hyperplane $\sum x_i = 1$.
\end{proof}


\subsection{The structure of the algorithm} \label{s-algm-main}

Recall that our overall plan is to iterate through all possible
combinations of zero and non-zero coordinates.  We do this by iterating
through all possible type vectors, thereby implicitly enforcing the
quadrilateral constraints as we go.  The framework for this iteration is
the \emph{type tree}, which we define as follows.

\begin{defn}[Type tree]
    The \emph{type tree} is a rooted tree of height $n$, where
    all leaf nodes are at depth $n$, and where each non-leaf node has
    precisely four children.  The four edges descending from each
    non-leaf node are labelled 0, 1, 2 and 3 respectively.

    Each node is labelled with a partial type vector.  The root node is
    labelled $(\unk,\ldots,\unk)$.  Each non-leaf node at depth $i$
    has a label of the form $(\tau_1,\ldots,\tau_i,\unk,\unk,\ldots,\unk)$,
    and its children along edges 0, 1, 2 and 3 have labels
    $(\tau_1,\ldots,\tau_i,0,\unk,\ldots,\unk)$,
    $(\tau_1,\ldots,\tau_i,1,\unk,\ldots,\unk)$,
    $(\tau_1,\ldots,\tau_i,2,\unk,\ldots,\unk)$ and
    $(\tau_1,\ldots,\tau_i,3,\unk,\ldots,\unk)$ respectively.
    Figure~\ref{fig-typetree} illustrates the type tree of height $n=2$.
\end{defn}

\begin{figure}[htb]
    \centering
    \includegraphics[scale=0.9]{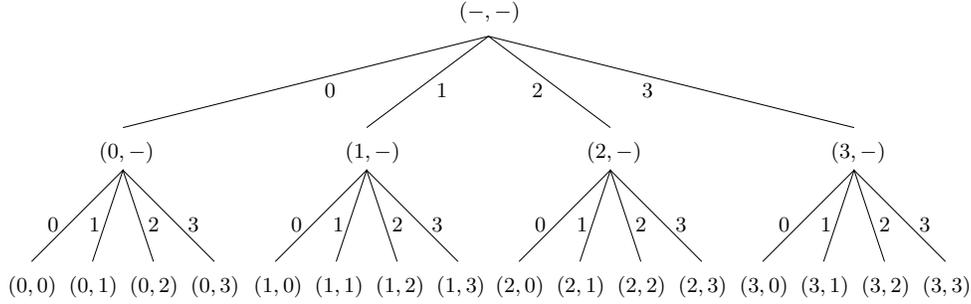}
    \caption{The type tree of height $n=2$}
    \label{fig-typetree}
\end{figure}

The algorithm walks through the type tree in a depth-first manner,
collecting admissible vertices of $\qproj$ as it goes.  The tree is
large however, with $O(4^n)$ nodes, and so we do not traverse the type
tree in full.  Instead we prune the tree so that we only follow edges
that might lead to admissible vertices.  The main tools that we use
for pruning are feasibility tests (based on the type constraints)
and domination tests (based on the previous vertices found so far).
The details are as follows.

\begin{algorithm}[Tree traversal algorithm] \label{a-tree}
    The following algorithm takes a 3-manifold triangulation as input,
    and outputs the set of all admissible vertices of the projective
    solution space $\qproj$.

    Construct the matching equations $M$ (see
    \cite{burton09-convert} for details), and delete rows until $M$ has
    full rank.  Initialise an empty set $V$; this will be used to
    store the type vectors for admissible vertices of $\qproj$.

    Beginning at the root of the type tree, process nodes recursively
    according to the following instructions.  In general, we process
    the node $N$ as follows:

    \begin{itemize}
        \item If $N$ is a non-leaf node then examine the four children of
        $N$ in turn, beginning with the child along edge $0$ (the order
        of the remaining children is unimportant).  For a child node
        labelled with the partial type vector $\tau$, recursively
        process this child if and only if all of the following
        conditions are satisfied:
        \begin{enumerate}[(a)]
            \item \emph{Zero test:} $\tau$ is not the zero vector.
            \item \emph{Domination test:}
            If we replace every unknown symbol $\unk$ with
            $0$, then $\tau$ does not dominate any type vector in $V$.
            \item \emph{Feasibility test:}
            There is some point $\mathbf{x} \in \R^{3n}$ that
            satisfies $\mathbf{x} \geq \mathbf{0}$, $M\mathbf{x} = \mathbf{0}$,
            and the type constraints for $\tau$.
        \end{enumerate}

        \item If $N$ is a leaf node then its label is a complete
        type vector $\tau$, and we claim that $\tau$ must in fact be
        the type vector of an admissible vertex of $\qproj$
        (we prove this in Theorem~\ref{t-algm}).
        Insert $\tau$ into the set $V$,
        reconstruct the full vertex $\mathbf{x} \in \R^{3n}$ from $\tau$
        as described by Lemma~\ref{l-type-reconstruct}, and output this vertex.
    \end{itemize}
\end{algorithm}

Both the domination test and the feasibility test are non-trivial.
We discuss the details of these tests in Sections~\ref{s-algm-domination}
and~\ref{s-algm-feasibility} below,
along with the data structure used to store the solution set $V$.
On the other hand, the zero test is simple; moreover, it is easy to
see that it only needs to be tested once (for the very first leaf node
that we process).

From the viewpoint of normal surface theory, it is important that this
algorithm be implemented using \emph{exact arithmetic}.  This is
because each admissible vertex $\mathbf{x} \in \qproj$ needs to be scaled
to its smallest integer multiple $\lambda \mathbf{x} \in \Z^{3n}$ in
order to extract useful information for topological applications.

This is problematic because both the numerators and the denominators
of the rationals that we encounter can grow exponentially large in $n$.
For a na{\"i}ve implementation we could simply use arbitrary-precision
rational arithmetic (as provided for instance by the GMP library \cite{gmp}).
However, the bounds that we prove in Section~\ref{s-bounds} of
this paper allow us to use native machine integer
types (such as 32-bit, 64-bit or 128-bit integers) for many reasonable-sized
applications.

\begin{theorem} \label{t-algm}
    Algorithm~\ref{a-tree} is correct.  That is,
    the output is precisely the set of all admissible vertices of $\qproj$.
\end{theorem}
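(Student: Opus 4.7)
My plan is to prove both directions — soundness (each leaf the algorithm reaches corresponds to an admissible vertex of $\qproj$) and completeness (the leaf of every admissible vertex is reached) — together by induction on complete type vectors in the order in which their leaves would be visited in the depth-first traversal. Since children are examined in the order $0,1,2,3$, this traversal order is just the lexicographic order on $\{0,1,2,3\}^n$ with $0<1<2<3$. The key initial observation is that whenever $\tau\geq\sigma$ and $\tau\neq\sigma$, the leaf for $\sigma$ precedes the leaf for $\tau$ in this order: at the first index $i$ where they differ, domination forces $\sigma_i=0<\tau_i$, so $\sigma$ sits in an earlier subtree. The inductive hypothesis will say that for every complete type vector $\tau'$ that precedes $\tau$ in DFS order, the leaf for $\tau'$ is reached and inserted into $V$ if and only if $\tau'$ is the type vector of an admissible vertex of $\qproj$.

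For the inductive step, first suppose $\tau$ is the type vector of some admissible vertex $\mathbf{v}$. I will trace the root-to-leaf path for $\tau$ and verify that each of the three tests passes at every descent. The zero test is non-trivial only at the all-zero leaf, which is excluded by Lemma~\ref{l-type-nonzero}. For feasibility at a child labelled $(\tau_1,\ldots,\tau_{i+1},\unk,\ldots,\unk)$, Lemma~\ref{l-type-constraints} provides a positive scalar multiple $\lambda\mathbf{v}$ satisfying the full type constraints for $\tau$; the partial type constraints for the child's label are a subset of these, so $\lambda\mathbf{v}$ is a feasible witness. For the domination test, suppose the child's partial type vector with unknowns replaced by $0$ dominates some $\sigma\in V$; unpacking the definition shows that then $\tau\geq\sigma$ as complete type vectors, and by the inductive hypothesis $\sigma=\tau(\mathbf{y})$ for some admissible vertex $\mathbf{y}\neq\mathbf{v}$, contradicting Lemma~\ref{l-type-dominate} applied to $\mathbf{v}$. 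Once the leaf is reached, Lemma~\ref{l-type-reconstruct} recovers $\mathbf{v}$ from $\tau$.

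Second, suppose the leaf $\tau$ is reached. The feasibility test at the leaf provides some $\mathbf{x}'\geq\mathbf{0}$ with $M\mathbf{x}'=\mathbf{0}$ satisfying the type constraints for $\tau$; Lemma~\ref{l-type-constraints} then gives $\tau(\mathbf{x}')=\tau$, and rescaling into the hyperplane $\sum x_i=1$ yields an admissible point $\mathbf{x}\in\qproj$ with $\tau(\mathbf{x})=\tau$. If $\mathbf{x}$ were not a vertex, Lemma~\ref{l-type-dominate} would furnish an admissible vertex $\mathbf{y}\neq\mathbf{x}$ with $\tau\geq\tau(\mathbf{y})$; by the observation in the first paragraph, $\tau(\mathbf{y})$ would precede $\tau$ in the DFS order, so the inductive hypothesis puts $\tau(\mathbf{y})$ in $V$ by the time the leaf $\tau$ is considered, and the domination test would then have blocked the descent — contradiction. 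Hence $\mathbf{x}$ is the required admissible vertex, uniquely determined by $\tau$ via Lemma~\ref{l-type-reconstruct}.

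I expect the main obstacle to be the domination step: I need to argue carefully that any admissible vertex threatening $\tau$'s vertex-hood (via Lemma~\ref{l-type-dominate}) has already been added to $V$ by the time the algorithm reaches $\tau$, which is exactly what the alignment between the domination order on type vectors and the depth-first traversal order buys us. A secondary subtlety is that the domination test at interior nodes uses the \emph{weakened} partial type vector obtained by replacing unknowns with $0$; this weakened domination implies complete-vector domination but not conversely, and keeping the direction of this implication straight is essential for both the failure of the test in one direction of the argument and its success in the other. All remaining steps should fall out of routine unpacking of Definitions~\ref{d-type} and~\ref{d-type-constraints} and the preparatory lemmas.
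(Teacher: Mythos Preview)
Your proof is correct and uses the same ingredients as the paper's---Lemmas~\ref{l-type-reconstruct}, \ref{l-type-dominate}, \ref{l-type-constraints}, and the crucial observation that strict domination forces DFS-precedence---though you package them as a single induction on the traversal order, whereas the paper argues in three non-inductive stages (every element of $V$ is the type vector of an admissible \emph{point}; every admissible vertex lands in $V$; every element of $V$ is in fact a vertex). Two small points to tighten: in the soundness direction you need $\tau(\mathbf{y})\neq\tau$ to invoke your precedence observation, and $\mathbf{y}\neq\mathbf{x}$ alone does not give this---one more appeal to Lemma~\ref{l-type-dominate}, this time applied to the vertex $\mathbf{y}$, closes the gap; and the traversal order is not literally lexicographic (the algorithm only insists the $0$-child come first, leaving the order of children $1,2,3$ unspecified), but that weaker guarantee is exactly what your precedence argument actually uses.
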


\begin{proof}
    By Lemma~\ref{l-type-reconstruct} the algorithm is
    correct if, when it terminates, the set $V$ is precisely the
    set of type vectors of all admissible vertices of $\qproj$.  We prove
    this latter claim in three stages.

    \begin{enumerate}
        \item \emph{Every type vector in $V$ is the type vector of some
        admissible point $\mathbf{x} \in \qproj$.}
        \label{en-algm-adm}

        Let $\sigma$ be some type vector in $V$.  Because we reached the
        corresponding leaf node in our tree traversal,
        we know that $\sigma$ passes the
        feasibility test.  That is, there is some $\mathbf{x} \in \R^{3n}$
        that satisfies $\mathbf{x} \geq 0$, $M\mathbf{x} = \mathbf{0}$,
        and the type constraints for $\sigma$.  Moreover, $\sigma$
        also passes the zero test, which means we can rescale
        $\mathbf{x}$ so that $\sum x_i = 1$.

        Because $\sigma$ was obtained from a leaf node it must be a
        complete type vector, which means that the type constraints for
        $\sigma$ also enforce the quadrilateral constraints.  In other words,
        this point $\mathbf{x}$ must be an admissible point in $\qproj$.

        From Lemma~\ref{l-type-constraints} we know that
        $\tau(\mathbf{x})$ matches $\sigma$, and because $\sigma$ is
        complete it follows that $\tau(\mathbf{x}) = \sigma$.
        That is, $\sigma$ is the type vector of an admissible point in $\qproj$.

        \item \emph{For every admissible vertex $\mathbf{x}$ of $\qproj$,
        the type vector $\tau(\mathbf{x})$ appears in $V$.}
        \label{en-algm-vtot}

        All we need to show here is that we can descend from the root of the
        type tree to the leaf node labelled $\tau(\mathbf{x})$ without being
        stopped by the zero test, the domination test or the feasibility
        test.

        Let $N$ be any ancestor of the leaf labelled $\tau(\mathbf{x})$
        (possibly including
        the leaf itself), and let $\sigma$ be its label.  This means
        that $\sigma$ can be obtained from $\tau(\mathbf{x})$ by replacing
        zero or more entries with the unknown symbol.  In particular,
        $\sigma$ matches $\tau(\mathbf{x})$.

        We know that $\sigma$ passes the zero test by
        Lemma~\ref{l-type-nonzero}.
        From Lemma~\ref{l-type-constraints} there is some
        $\lambda > 0$ for which $\lambda\mathbf{x}$ satisfies
        the type constraints for $\sigma$.
        Since $\mathbf{x} \in \qproj$
        it is clear also that $\lambda\mathbf{x} \geq \mathbf{0}$
        and $M \lambda\mathbf{x} = \mathbf{0}$,
        and so $\sigma$ passes the feasibility test.

        By Lemma~\ref{l-type-dominate} the type vector
        $\tau(\mathbf{x})$ does not dominate $\tau(\mathbf{y})$
        for any admissible point $\mathbf{y} \in \qproj$, and from
        stage~\ref{en-algm-adm} above it follows that
        $\tau(\mathbf{x})$ does not dominate any type vector in $V$.
        The same is still true if we replace some elements of $\tau(\mathbf{x})$
        with $0$ (since this operation cannot introduce new dominations),
        which means that $\sigma$ passes the domination test.

        \item \emph{Every type vector in $V$ is the
        type vector of some admissible vertex $\mathbf{x} \in \qproj$.}
        \label{en-algm-ttov}

        Suppose that some $\sigma \in V$ is not the type vector
        of an admissible vertex.  By stage~\ref{en-algm-adm} above
        we know there is \emph{some} admissible point $\mathbf{x} \in \qproj$
        for which $\sigma = \tau(\mathbf{x})$.  Because $\mathbf{x}$ is
        not a vertex, it follows from Lemma~\ref{l-type-dominate}
        that $\sigma \geq \tau(\mathbf{y})$
        for some admissible vertex $\mathbf{y} \in \qproj$.

        By stage~\ref{en-algm-vtot} above we know that
        $\tau(\mathbf{y})$ also appears in $V$.  Since
        $\sigma \geq \tau(\mathbf{y})$, the type vector
        $\tau(\mathbf{y})$ can be obtained from $\sigma$
        by replacing one or more entries with $0$.  This means that the
        algorithm processes the leaf labelled $\tau(\mathbf{y})$
        \emph{before} the leaf labelled $\sigma$, yielding a contradiction:
        the node labelled $\sigma$ should never have been processed, since
        it must have failed the domination test.
    \end{enumerate}

    Steps \ref{en-algm-vtot} and \ref{en-algm-ttov} together show that
    $V$ is precisely the set of type vectors of all admissible vertices
    of $\qproj$, and the algorithm is therefore proven correct.
\end{proof}

\begin{remark}
    All three tests (zero, domination and feasibility) can be performed
    in polynomial time in the input and/or output size.  What prevents
    the entire algorithm from running in polynomial time is \emph{dead ends}:
    nodes that we process but which do not eventually lead to any
    admissible vertices.

    It is worth noting that
    Fukuda et~al.\ \cite{fukuda97-analysis} describe a backtracking
    framework that does not suffer from dead ends.  However, this comes
    at a significant cost: before processing each node they must solve the
    \emph{restricted vertex problem}, which essentially asks whether a
    vertex exists beneath a given node in the search tree.  They prove
    this restricted vertex problem to be NP-complete, and they do not give an
    explicit algorithm to solve it.
\end{remark}

We discuss time and space complexity further in Section~\ref{s-complexity},
where we find that---despite the presence of dead ends---this tree
traversal algorithm enjoys significantly lower worst-case complexity bounds
than the prior state of the art.  This is supported with experimental
evidence in Section~\ref{s-perf}.
In the meantime, it is worth noting some additional advantages of the
tree traversal algorithm:
\begin{itemize}
    \item \emph{The tree traversal algorithm lends itself well to
    parallelisation.}

    The key observation here is that, for each non-leaf node $N$,
    the children along edges $1$, $2$ and $3$ can be processed
    simultaneously (though edge $0$ still needs to be processed first).
    There is no need for these three processes
    to communicate, since they cannot affect each other's domination tests.
    This three-way branching can be carried out repeatedly as we move
    deeper through the type tree, allowing us to make effective use of a
    large number of processors if they are available.
    Distributed processing is also practical, since at each branch the
    data transfer has small polynomial size.

    \item \emph{The tree traversal algorithm supports incremental output.}

    If the type vectors of admissible vertices are reasonably well
    distributed across the type tree (as opposed to tightly clustered in a
    few sections of the tree), then we can expect a regular stream of
    output as the algorithm runs.  From practical experience, this is indeed
    what we see: for problems that run for hours or even days, the algorithm
    outputs solutions at a continual (though slowing) pace, right from the
    beginning of the algorithm through until its termination.

    This incremental output is important for two reasons:

    \begin{itemize}
        \item It allows \emph{early termination} of the algorithm.
        For many topological decision algorithms, our task is to find
        \emph{any} admissible vertex with some property $P$ (or else
        conclude that no such vertex exists).
        As the tree traversal algorithm outputs vertices we can
        test them immediately for property $P$, and as soon as such a
        vertex is found we can terminate the algorithm.

        \item It further assists with \emph{parallelisation} for
        problems in which testing for property $P$ is expensive
        (this is above and beyond the parallelisation
        techniques described above).
        An example is the Hakenness testing problem, where the test for
        property $P$ can be far more difficult than the original
        normal surface enumeration \cite{burton12-ws}.  For problems
        such as these, we can have a main tree traversal process that
        outputs vertices into a queue for processing,
        and separate dedicated processes that simultaneously work through
        this queue and test vertices for property $P$.
    \end{itemize}

    This is a significant improvement over the double description method
    (the prior state of the art for normal surface enumeration).
    As outlined in Section~\ref{s-prelim}, the double description method
    works inductively through a series of stages, and it typically does
    not output any solutions at all until it reaches the final stage.

    \item \emph{The tree traversal algorithm supports progress tracking.}

    For any given node in the type tree, it is simple to estimate when it
    appears (as a percentage of running time) in a depth-first traversal
    of the full tree.  We can present these estimates to the user as the
    tree traversal algorithm runs, in order to give them some indication of
    the running time remaining.

    Of course such estimates are inaccurate: they ignore pruning (which
    allows us to avoid significant portions of the type tree), and
    they ignore the variability in running times for the feasibility and
    domination tests (for instance, the domination test may slow down
    as the algorithm progresses and the solution set $V$ grows).
    Nevertheless, they provide a rough indication of how far through the
    algorithm we are at any given time.

    Again this is a significant improvement over the double description method.
    Some of the stages in the double description method
    can run many orders of magnitude slower than others
    \cite{avis97-howgood-compgeom,burton09-convert},
    and it is difficult to estimate in advance how slow each stage
    will be.  In this sense,
    the obvious progress indicators (i.e., which stage we are up to and
    how far through it we are) are extremely poor indicators of global
    progress through the double description algorithm.
\end{itemize}

\subsection{Implementing the domination test} \label{s-algm-domination}

What makes the domination test difficult is that
the solution set $V$ can grow exponentially large \cite{burton10-complexity}.
For closed triangulations, the best known theoretical bound is
$|V| \in O\left(\left[\frac{3+\sqrt{13}}{2}\right]^n\right)
\simeq O\left(3.303^n\right)$ as proven in \cite{burton11-asymptotic},
and for bounded triangulations there is only the trivial bound
$|V| \leq 4^n$ (the total number of leaves in the type tree).

The central operation in the domination test is to decide, given a complete%
\footnote{Recall that for the domination test we replace every unknown
symbol $\unk$ with $0$.}
type vector $\tau$, whether there exists some $\sigma \in V$ for which
$\tau \geq \sigma$.
A na{\"i}ve implementation simply walks through the entire set $V$,
giving a time complexity of $O(n|V|)$ (since each individual test of
$\tau \geq \sigma$ runs in $O(n)$ time).

However, we can do better with an efficient data
structure for $V$.  A key observation is that, although we might have
up to $4^n$ type vectors in $V$, there are at most $2^n$ candidate
type vectors $\sigma$ that could possibly be dominated by $\tau$.
More precisely, suppose that $\tau$ contains $k$ non-zero entries and
$n-k$ zero entries (where $0 \leq k \leq n$).  Then the only possible
type vectors that $\tau$ could dominate are those obtained by
replacing some of the non-zero entries in $\tau$ with zeroes,
yielding $2^k$ such candidates in total.

We could therefore store $V$ using a data structure that supports fast
insertion and fast lookup (such as a red-black tree),
and implement the domination test by enumerating all $2^k$ candidate
vectors $\sigma$ and searching for each of them in $V$.  This has a time
complexity of $O(n 2^k \log|V|)$, which can be simplified to $O(n^2\,2^k)$.

Although this looks better than the na{\"i}ve implementation in theory,
it can be significantly worse in
practice.  This is because, although $|V|$ has a theoretical upper bound
of either $O(3.303^n)$ or $4^n$, in practice it is far smaller.
Detailed experimentation on closed triangulations \cite{burton10-complexity}
suggests that on average $|V|$ grows at a rate below $1.62^n$,
which means that enumerating all candidate vectors $\sigma$ can be
far worse than simply testing every $\sigma \in V$.%

A compromise is possible that gives the best of both worlds.
We can represent $V$ using a tree structure that
mirrors the type tree but stores only those nodes with descendants in $V$.
This is essentially a trie (i.e., a prefix tree),
and is illustrated in Figure~\ref{fig-trie}.

\begin{figure}[htb]
    \centering
    \includegraphics[scale=0.9]{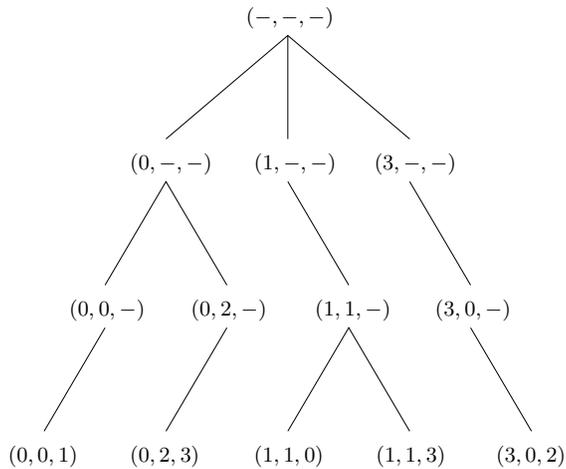}
    \caption{A trie representing
    $V = \{(0,0,1),\ (0,2,3),\ (1,1,0),\ (1,1,3),\ (3,0,2)\}$}
    \label{fig-trie}
\end{figure}

To implement the domination test we simply walk through the portion of the
trie that corresponds to nodes dominated by $\tau$.  This walk spans at most
$2^k$ leaf nodes plus at most $n 2^k$ ancestors.  However, we also know
that the trie contains at most $n|V|$ nodes in total (ancestors included).
The walk therefore covers $O(\min\{n|V|,\,n2^k\})$ nodes,
and since each comparison $\tau \geq \sigma$ comes for free with the
structure of the trie, we have an overall running time of
$O(\min\{n|V|,\,n2^k\})$ for the domination test.

\subsection{Implementing the feasibility test} \label{s-algm-feasibility}

Recall that the feasibility test asks whether any point
$\mathbf{x} \in \R^{3n}$ satisfies
$\mathbf{x} \geq \mathbf{0}$, $M\mathbf{x}=\mathbf{0}$, and the type
constraints for a given partial type vector $\tau$.
Tests of this kind are standard in the linear programming literature,
and can be performed in polynomial time using interior point methods
\cite{deghellinck86-newton,karmarkar84-new}.

However, our implementation of the feasibility test is based not on
interior point methods, but on the \emph{dual simplex method}.
This is a pivot-based algorithm, and although it has a worst-case
exponential running time in theory, it is simple to code and performs
extremely well in practice \cite{megiddo86-selfdual,smale83-simplex}.
Moreover, the dual simplex method makes it easy to add
new constraints to a previously-solved problem, allowing us to
\emph{incrementally} update our solutions $\mathbf{x} \in \R^{3n}$ as
we walk through the type tree.
For details on the dual simplex method, the reader is referred to a
standard text on linear programming such as \cite{bazaraa10-lp}.

Here we outline the framework of the dual simplex method as it applies
to the feasibility test.  We explicitly describe the various matrices
and vectors that appear, since these matrices and vectors play a
key role in the results of Section~\ref{s-bounds}.

Our first step is to simplify the integer matrix of matching equations $M$:

\begin{defn}[Reduced matching equations]
    Let $M$ be the integer matrix of matching equations, as described in
    Section~\ref{s-prelim}.  The \emph{reduced matching equation matrix}
    $\redm$ is obtained from $M$ by dividing each non-zero column by its
    greatest common divisor, which is always taken to be positive.
\end{defn}

An example matrix $M$ and the corresponding reduced matrix $\redm$
are illustrated below.  The first and fourth columns are divided
by $2$, and the remaining columns are left unchanged.
\[ M=\left[{\begin{array}{rrrrrr}
    0&\phantom{-1}\llap{1}&-1&2&-1&-1\\
    -2&0&2&-2&0&2
\end{array}}\right]
\quad \Longrightarrow \quad
\redm=\left[{\begin{array}{rrrrrr}
    \mathbf{0}&\phantom{-1}\llap{1}&-1&\mathbf{1}&-1&-1\\
    \mathbf{-1}&0&2&\mathbf{-1}&0&2
\end{array}}\right] \]

It is clear from Lemma~\ref{l-matching-sparse} that each greatest common
divisor is $1$, $2$, $3$ or $4$, and in the case of an orientable
triangulation just $1$ or $2$.  Although this reduction is small,
it turns out to have a significant impact on the arithmetical bounds
that we prove in Section~\ref{s-bounds}.

Although a solution to $M\mathbf{x}=\mathbf{0}$ need not satisfy
$\redm\mathbf{x}=\mathbf{0}$ (and vice versa), the two
matrices are interchangeable for our purposes as shown by the following
simple lemma.

\begin{lemma} \label{l-feasibility-reduce}
    Replacing $M$ with $\redm$ does not affect the results of the
    feasibility test.  In other words, for any partial type vector $\tau$
    the following two statements are equivalent:
    \begin{itemize}
        \item
        There exists an $\mathbf{x} \in \R^{3n}$ satisfying
        $\mathbf{x} \geq \mathbf{0}$, $M\mathbf{x}=\mathbf{0}$ and the type
        constraints for $\tau$;
        \item
        There exists an $\mathbf{x}' \in \R^{3n}$ satisfying
        $\mathbf{x}' \geq \mathbf{0}$, $\redm\mathbf{x}'=\mathbf{0}$
        and the type constraints for $\tau$.
    \end{itemize}
\end{lemma}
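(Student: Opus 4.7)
The plan is to construct explicit conversions between feasible points for the two systems by scaling coordinates by the per-column divisors used in forming $\redm$. Let $d_c$ denote the greatest common divisor of the $c$-th column of $M$ (taken positive, and set to $1$ on zero columns), so that $\redm_c = M_c/d_c$. By Lemma~\ref{l-matching-sparse}, each $d_c$ is a positive integer with $d_c \leq 4$.

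For the forward direction, suppose $\mathbf{x}$ satisfies $M\mathbf{x}=\mathbf{0}$, $\mathbf{x}\geq\mathbf{0}$, and the type constraints for $\tau$. Define $\mathbf{x}'$ coordinate-wise by $x'_c = d_c x_c$. Then $\redm \mathbf{x}' = \sum_c d_c x_c \redm_c = \sum_c x_c M_c = \mathbf{0}$ and $\mathbf{x}' \geq \mathbf{0}$. Equality constraints $x_j=0$ transfer immediately since $x'_j = d_j \cdot 0 = 0$, and inequality constraints $x_j \geq 1$ become $x'_j = d_j x_j \geq d_j \geq 1$, so all type constraints are preserved.

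For the reverse direction, suppose $\mathbf{x}'$ satisfies $\redm \mathbf{x}' = \mathbf{0}$, $\mathbf{x}'\geq\mathbf{0}$, and the type constraints for $\tau$. Define $\mathbf{y}$ by $y_c = x'_c / d_c$ and then rescale, setting $\mathbf{x} = 4\mathbf{y}$. The equation $M\mathbf{y} = \sum_c (x'_c/d_c) M_c = \sum_c x'_c \redm_c = \mathbf{0}$ holds, and the global positive scaling preserves $M\mathbf{x}=\mathbf{0}$, non-negativity, and the equality constraints $x_j=0$. For an inequality constraint $x'_j \geq 1$, we get $x_j = 4 x'_j / d_j \geq 4/d_j \geq 1$, since $d_j \leq 4$.

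The only subtlety is the asymmetry between the two directions: multiplying each coordinate by $d_c$ strengthens the $\geq 1$ inequalities automatically, whereas dividing by $d_c$ can weaken them below $1$, which is why the reverse direction needs an additional global rescale. Once one notices that the equation system is homogeneous (so any positive scaling preserves $M\mathbf{x} = \mathbf{0}$ and the equality parts of the type constraints), this obstacle disappears, and the whole lemma reduces to these two short checks.
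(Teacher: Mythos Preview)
Your proof is correct and follows essentially the same approach as the paper: coordinate-wise scaling by $d_c$ in the forward direction, and by a constant multiple of $1/d_c$ in the reverse direction to restore the $\geq 1$ constraints. The only cosmetic difference is that the paper scales by $D = \lcm(d_1,\ldots,d_{3n})$ rather than your fixed constant $4$; either choice suffices here, though the paper's $D$ is reused later when tracking integer bounds.
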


\begin{proof}
    For each $i=1,\ldots,3n$, let $d_i$ denote the greatest common divisor
    of the $i$th column of $M$ (or $1$ if the $i$th column of $M$ is zero).
    In addition, let $D = \lcm(d_1,\ldots,d_{3n})$.

    Suppose that $\mathbf{x} = (x_1,\ldots,x_{3n})$ satisfies
    $\mathbf{x} \geq \mathbf{0}$, $M\mathbf{x}=\mathbf{0}$ and the type
    constraints for $\tau$.  Then it is clear that
    $\mathbf{x}' = (d_1x_1,\ldots,d_{3n}x_{3n})$ satisfies
    $\mathbf{x}' \geq \mathbf{0}$, $\redm\mathbf{x}'=\mathbf{0}$ and the type
    constraints for $\tau$.

    Conversely, suppose that $\mathbf{x}' = (x'_1,\ldots,x'_{3n})$ satisfies
    $\mathbf{x}' \geq \mathbf{0}$, $\redm\mathbf{x}'=\mathbf{0}$ and the type
    constraints for $\tau$.  In this case, we find that
    $\mathbf{x} = (Dx'_1/d_1,\ldots,Dx'_{3n}/d_{3n})$ satisfies
    $\mathbf{x} \geq \mathbf{0}$, $M\mathbf{x}=\mathbf{0}$ and the type
    constraints for $\tau$.
\end{proof}

Our next step is to replace type constraints of the form $x_i \geq 1$
with equivalent constraints of the form $x'_i \geq 0$.

\begin{lemma} \label{l-feasibility-subtract}
    Let $\tau$ be any partial type vector, and let the type constraints
    for $\tau$ be $x_i = 0$ for all $i \in I$ and
    $x_j \geq 1$ for all $j \in J$.
    Then the following two statements are equivalent:
    \begin{itemize}
        \item
        There exists an $\mathbf{x} \in \R^{3n}$ for which
        $\mathbf{x} \geq \mathbf{0}$, $\redm\mathbf{x}=\mathbf{0}$,
        $x_i = 0$ for all $i \in I$ and $x_j \geq 1$ for all $j \in J$;
        \item
        There exists an $\mathbf{x}' \in \R^{3n}$ for which
        $\mathbf{x}' \geq \mathbf{0}$, $\redm\mathbf{x}'=\mathbf{b}$ and
        $x'_i = 0$ for all $i \in I$,
        where $\mathbf{b} = - \sum_{j \in J} \redm_j$,
        and where $\redm_j$ refers to the $j$th column of $\redm$.
    \end{itemize}
\end{lemma}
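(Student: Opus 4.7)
The plan is to prove this by an explicit linear change of variables that translates the non-homogeneous bounds $x_j \geq 1$ into the homogeneous bounds $x'_j \geq 0$, at the cost of moving the inhomogeneity into the right-hand side of the matching equations. Concretely, I would set
\[ \mathbf{x}' = \mathbf{x} - \sum_{j \in J} \mathbf{e}_j, \qquad \text{equivalently}\qquad \mathbf{x} = \mathbf{x}' + \sum_{j \in J} \mathbf{e}_j, \]
where $\mathbf{e}_j$ denotes the $j$th standard basis vector in $\R^{3n}$.

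For the forward direction, I would start from an $\mathbf{x}$ satisfying the first list of conditions and verify the four requirements for $\mathbf{x}'$ in turn: (i) $\mathbf{x}' \geq \mathbf{0}$ follows since $x'_j = x_j - 1 \geq 0$ for $j \in J$ and $x'_k = x_k \geq 0$ for $k \notin J$; (ii) $\redm \mathbf{x}' = \redm \mathbf{x} - \sum_{j \in J} \redm_j = \mathbf{0} + \mathbf{b} = \mathbf{b}$; (iii) for $i \in I$ we have $x'_i = x_i - [i \in J] = 0$, where I would note explicitly that $I$ and $J$ are disjoint because Definition~\ref{d-type-constraints} never imposes both $x_i = 0$ and $x_i \geq 1$ on the same coordinate. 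The reverse direction is the mirror image: starting from $\mathbf{x}'$ and setting $\mathbf{x} = \mathbf{x}' + \sum_{j \in J} \mathbf{e}_j$, we get $\mathbf{x} \geq \mathbf{0}$ (sum of non-negatives), $x_j = x'_j + 1 \geq 1$ for $j \in J$, $x_i = x'_i = 0$ for $i \in I$ (again using $I \cap J = \emptyset$), and $\redm \mathbf{x} = \mathbf{b} + \sum_{j \in J} \redm_j = \mathbf{0}$.

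There is no real obstacle here; the only subtlety worth spelling out in the write-up is the disjointness of $I$ and $J$, which ensures that the translation by $\sum_{j \in J} \mathbf{e}_j$ does not disturb any of the equality constraints indexed by $I$. Everything else is a one-line verification from the definition of $\mathbf{b}$ and the linearity of $\redm$.
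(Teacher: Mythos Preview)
Your proof is correct and is precisely the substitution the paper uses: the paper's own proof is the one-line remark that the two statements are equivalent under $x'_j = x_j - 1$ for $j \in J$ and $x'_j = x_j$ for $j \notin J$. Your explicit mention of $I \cap J = \emptyset$ is a welcome clarification that the paper leaves implicit.
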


\begin{proof}
    The two statements are equivalent under the substitution
    $x'_j = x_j - 1$ for $j \in J$ and $x'_j = x_j$ for $j \notin J$.
\end{proof}

Based on Lemmata~\ref{l-feasibility-reduce}
and~\ref{l-feasibility-subtract}, we structure the feasibility test
as follows.

\begin{algorithm}[Framework for feasibility test]
\label{a-feasibility-framework}
To perform the feasibility test, we use the dual simplex method to
search for a solution $\mathbf{x} \in \R^{3n}$ for which
$\mathbf{x} \geq \mathbf{0}$, $\redm \mathbf{x} = \mathbf{b}$
and $x_i = 0$ for all $i \in I$.  Here the
vector $\mathbf{b}$ and the index set $I$ reflect
the partial type vector $\tau$ that we are working with, and we
construct both $\mathbf{b}$ and $I$ incrementally as follows:
\begin{itemize}
    \item When we begin at the root node with
    $\tau = (\unk,\ldots,\unk)$, we set
    $\mathbf{b} = \mathbf{0}$ and $I = \emptyset$.

    \item Suppose we step down from a parent node labelled
    $(\tau_1,\ldots,\tau_k,\unk,\unk,\ldots,\unk)$ to a child node
    labelled $(\tau_1,\ldots,\tau_k,\tau_{k+1},\unk,\ldots,\unk)$.
    \begin{itemize}
        \item If $\tau_{k+1} = 0$ then we insert the three new indices
        $3k+1$, $3k+2$ and $3k+3$ into the set $I$, reflecting the
        additional type constraints $x_{3k+1}=x_{3k+2}=x_{3k+3}=0$.
        \item If $\tau_{k+1} = 1$ then we insert the two new indices
        $3k+2$ and $3k+3$ into $I$ and subtract the column
        $\redm_{3k+1}$ from $\mathbf{b}$, reflecting the
        additional type constraints
        $x_{3k+2}=x_{3k+3}=0$ and $x_{3k+1} \geq 1$.
        \item The cases $\tau_{k+1} = 2$ and $3$ are handled in a
        similar fashion to $\tau_{k+1}=1$ above.
    \end{itemize}
\end{itemize}
\end{algorithm}

It is important to note that this ``step down'' operation involves only
minor changes to $\mathbf{b}$ and $I$.  This means that, instead of
solving each child feasibility problem from scratch, we can use the
solution $\mathbf{x}$ from the parent node as a starting point
for the new dual simplex search when we introduce our new constraints.
We return to this shortly.

Recall from Algorithm~\ref{a-tree} that $M$ is full rank,
and suppose that $\rank M = \rank \redm = k$.
To perform the feasibility test we maintain the following structures
(all of which are common to revised simplex algorithms, and again
we refer the reader to a standard text \cite{bazaraa10-lp} for details):
\begin{itemize}
    \item a \emph{basis} $\basis = \{\basiselt_1,\ldots,\basiselt_k\}
    \subseteq \{1,\ldots,3n\}$, which identifies $k$ linearly
    independent columns of $\redm$;

    \item the $k \times k$ inverse matrix $\basisinv$, where
    $\basismat$ denotes the $k \times k$ matrix formed from columns
    $\basiselt_1,\ldots,\basiselt_k$ of $\redm$.
\end{itemize}

It is useful to think of $\basisinv$ as a matrix of row operations that
we can apply to both $\redm$ and $\mathbf{b}$.  The product $\basisinv\redm$
includes the $k \times k$ identity as a submatrix, and the product
$\basisinv\mathbf{b}$ identifies the elements of a solution vector
$\mathbf{x}$ as follows.

Any basis $\basis$ describes a solution
$\mathbf{x} \in \R^{3n}$ to the equation $\redm\mathbf{x}=\mathbf{b}$,
in which $x_j = 0$ for each $j \notin \basis$,
and where the product $\basisinv\mathbf{b}$
lists the remaining elements $(x_{\basiselt_1},\ldots,x_{\basiselt_k})$.
Such a solution only satisfies $\mathbf{x} \geq \mathbf{0}$ if we have
$\basisinv\mathbf{b} \geq \mathbf{0}$.
We call the basis \emph{feasible} if $\basisinv\mathbf{b} \geq \mathbf{0}$,
and we call it \emph{infeasible} if $\basisinv\mathbf{b} \ngeq \mathbf{0}$.

The aim of the dual simplex method is to find a feasible basis.
It does this by constructing some initial (possibly infeasible) basis,%
\footnote{The dual simplex method usually has an extra requirement
    that this initial basis be \emph{dual feasible}.  For
    our application there is no objective function to optimise, and so
    this extra requirement can be ignored.}
and then repeatedly modifying this basis using \emph{pivots}.  Each pivot
replaces some index in $\basis$ with some other index not in $\basis$,
and corresponds to a simple sequence of row operations that we apply to
$\basisinv$.
Eventually the dual simplex method produces either a feasible basis or a
certificate that no feasible basis exists.
We apply this procedure to our problem as follows:

\begin{algorithm}[Details of feasibility test] \label{a-feasibility-details}
    At the root node of the type tree, we simply construct any initial
    basis and then pivot until we obtain either a feasible basis
    or a certificate that no
    feasible basis exists (in which case the root node fails the
    feasibility test).

    When we step down from a parent node to a child node, we ``inherit''
    the feasible basis from the parent and use this as the initial basis
    for the child.  This may no longer be feasible because $\mathbf{b}$
    might have changed; if not then once again we pivot until the basis
    can be made feasible or we can prove that no feasible basis exists.

    Recall that we enlarge the index set $I$ when we step to a child node,
    which means that we introduce new constraints of the form $x_j = 0$.
    Each constraint $x_j=0$ is enforced as follows:
    \begin{itemize}
        \item If $j$ is not in the basis then we simply eliminate the
        variable $x_j$ from the system (effectively deleting the
        corresponding column from $\redm$).

        \item If $j$ is in the basis then we pivot to remove it
        and eliminate $x_j$ as before.
        If we cannot pivot $j$ out of the basis then some row of the product
        $\basisinv\redm$ must be zero everywhere except for column $j$,
        which means the equation $\basisinv\redm\mathbf{x}=\basisinv\mathbf{b}$
        implies $x_j = z$ for some scalar $z$.
        If $z \neq 0$ then it is impossible to enforce $x_j=0$.
        If $z=0$ then $x_j=0$ is enforced automatically, and there is
        nothing we need to do.
    \end{itemize}

    The child node is deemed to pass the feasibility test if and only if
    we are able to obtain a feasible basis after enforcing all of our
    new constraints.
\end{algorithm}

The procedures described in Algorithm~\ref{a-feasibility-details}
are all standard elements of the dual simplex method, and once more we
refer the reader to a standard text \cite{bazaraa10-lp} for details and proofs.
There are typically many choices available for which pivots to perform;
one must be careful to choose a pivoting rule that avoids cycling, since
the solution cone $\qcone$ is highly degenerate.  We use Bland's rule
\cite{bland77-pivoting} in our implementation.

\section{Arithmetical bounds} \label{s-bounds}

The tree traversal algorithm involves a significant amount
of rational arithmetic---most notably, in the feasibility test
(Section~\ref{s-algm-feasibility}) and the reconstruction of vertices of
$\qproj$ from their type vectors (Lemma~\ref{l-type-reconstruct}).
Moreover, the rationals that we work with can have exponentially
large numerators and denominators, which means that in general we must
use arbitrary precision arithmetic (as provided for instance by the
GMP library \cite{gmp}).

The aim of this section is to derive explicit bounds on the rational numbers
that appear in our calculations.  In particular,
Theorem~\ref{t-matrix-bounds} bounds the elements of the matrix
$\basisinv$ that we manipulate in the feasibility test, and
Corollaries~\ref{c-vertex-bounds} and~\ref{c-vertex-bounds-closed}
bound the coordinates of the final vertices of $\qproj$.

These results are useful because the relevant bounds can be precomputed
at the beginning of the algorithm, and for many reasonable-sized
problems they allow us to work in native machine integer types
(such as 32-bit, 64-bit or 128-bit integers).  These native types
are significantly faster than arbitrary precision arithmetic:
experimentation with the tree traversal algorithm shows a speed increase
of over 10 times.

Furthermore, Corollary~\ref{c-vertex-bounds-closed} is a significant
improvement on the best known bounds for the coordinates of vertices
of $\qproj$.  These have been independently studied by
Hass, Lagarias and Pippenger \cite{hass99-knotnp} and
subsequently by Burton, Jaco, Letscher and Rubinstein; we recount
these earlier results before presenting Corollary~\ref{c-vertex-bounds-closed}
below.

Central to all of the results in this section is the
\emph{bounding constant} $\bconst$.  This is a function of the
reduced matching equation matrix $\redm$, and to obtain the tightest possible
bounds it can be computed at runtime when the tree
traversal algorithm begins.  If global quantities are required then
Lemma~\ref{l-bconst-bounds} and Corollary~\ref{c-bconst-closed}
bound $\bconst$ in terms of $n$ alone.

\begin{defn}[Bounding constant] \label{d-bconst}
    The \emph{bounding constant} $\bconst$ is defined as follows.
    For each column $c=1,\ldots,3n$, compute the Euclidean length of the
    $c$th column of $\redm$; that is,
    $(\sum_i \redm_{i,c}^2)^\frac12$.
    We define $\bconst$ to be the product of the $k$ largest
    of these lengths, where $k = \rank \redm$.
\end{defn}

For example, consider again the matrix
\[ \redm=\left[{\begin{array}{rrrrrr}
    0&\phantom{-1}\llap{1}&-1&1&-1&-1\\
    -1&0&2&-1&0&2
\end{array}}\right], \]
which has rank $k=2$ and column lengths
$1$, $1$, $\sqrt{5}$, $\sqrt{2}$, $1$ and $\sqrt{5}$.
Here the two largest column lengths are both $\sqrt{5}$, and
so $\bconst = \sqrt{5} \cdot \sqrt{5} = 5$.

\begin{lemma} \label{l-bconst-bounds}
    For an arbitrary 3-manifold triangulation,
    $\bconst \leq (\sqrt{10})^{\rank \redm} \leq (\sqrt{10})^{3n}$.
    For an orientable triangulation, we can improve this bound to
    $\bconst \leq (\sqrt{6})^{\rank \redm} \leq (\sqrt{6})^{3n}$.
\end{lemma}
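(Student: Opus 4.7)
The plan is to bound each individual column length of $\redm$ and then multiply at most $k = \rank\redm \leq 3n$ such bounds together, as required by Definition~\ref{d-bconst}. The main ingredient is Lemma~\ref{l-matching-sparse}, which says that each column of the unreduced matrix $M$ has integer entries in $\{0,\pm1,\pm2,\pm3,\pm4\}$ with $\sum_i |M_{i,c}| \le 4$, and that only $\pm 1$ and $\pm 2$ occur in the orientable case. Since the $c$th column of $\redm$ is obtained by dividing the $c$th column of $M$ by $d_c := \gcd_i M_{i,c}$, the squared length of the $c$th column of $\redm$ equals $\frac{1}{d_c^2}\sum_i M_{i,c}^2$.

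Thus the task reduces to a small combinatorial optimisation: maximise $\frac{1}{d^2}\sum_i a_i^2$ over all integer multisets $\{a_i\}$ with $|a_i| \in \{1,2,3,4\}$, $\sum_i |a_i| \le 4$ and $\gcd_i a_i = d$. I would simply enumerate the finitely many possibilities for the multiset of nonzero absolute values. In the general (non-orientable) case the candidate multisets are $\{4\}$, $\{3\}$, $\{3,1\}$, $\{2\}$, $\{2,2\}$, $\{2,1\}$, $\{2,1,1\}$, $\{1\}$, $\{1,1\}$, $\{1,1,1\}$, $\{1,1,1,1\}$, giving normalised squared lengths $1, 1, 10, 1, 2, 5, 6, 1, 2, 3, 4$ respectively; the maximum is $10$, attained by $\{3,1\}$. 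Hence every column of $\redm$ has Euclidean length at most $\sqrt{10}$, so the product of any $k$ column lengths is bounded by $(\sqrt{10})^k \le (\sqrt{10})^{3n}$.

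In the orientable case only the multisets with entries in $\{1,2\}$ survive, namely $\{2\}$, $\{2,2\}$, $\{2,1\}$, $\{2,1,1\}$, $\{1\}$, $\{1,1\}$, $\{1,1,1\}$, $\{1,1,1,1\}$, giving normalised squared lengths $1, 2, 5, 6, 1, 2, 3, 4$. The maximum is $6$, attained by $\{2,1,1\}$, so each column of $\redm$ has length at most $\sqrt{6}$ and the stated bound $\bconst \le (\sqrt{6})^{\rank\redm} \le (\sqrt{6})^{3n}$ follows in exactly the same way.

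There is essentially no hard step here: the whole argument is a finite case-check driven by Lemma~\ref{l-matching-sparse}. The only place one has to be mildly careful is to respect the constraint $\gcd_i a_i = d$ when normalising (for instance, $\{2,2\}$ must be divided by $2$, giving length $\sqrt{2}$ rather than $\sqrt{8}$), since it is precisely the division by $d$ that makes the bound $\sqrt{10}$ (respectively $\sqrt{6}$) tight rather than something larger.
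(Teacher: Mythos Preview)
Your proof is correct and follows essentially the same approach as the paper: bound each column length of $\redm$ by a finite case-check using Lemma~\ref{l-matching-sparse} and the $\gcd=1$ normalisation, then take the product of $k=\rank\redm\le 3n$ such bounds. The only cosmetic difference is that the paper lists the already-reduced column patterns directly, whereas you enumerate the unreduced multisets and divide by the $\gcd$; the resulting maxima ($\sqrt{10}$ from $\{3,1\}$ and $\sqrt{6}$ from $\{2,1,1\}$) agree.
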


\begin{proof}
    Using Lemma~\ref{l-matching-sparse} and the fact that each column of
    $\redm$ has $\gcd=1$, there are only a few possibilities for the non-zero
    entries in a column of $\redm$.  In the orientable case the only
    options are $(\pm1,\pm1,\pm1,\pm1)$, $(\pm1,\pm1,\pm1)$,
    $(\pm1,\pm1)$, $(\pm1)$, $(\pm2,\pm1,\pm1)$ and $(\pm2,\pm1)$.
    For non-orientable triangulations we have the additional possibility
    of $(\pm3,\pm1)$.

    It follows that each column has Euclidean length $\leq \sqrt{6}$
    in the orientable case and $\leq \sqrt{10}$ otherwise, yielding bounds
    of $\bconst \leq (\sqrt{6})^{\rank \redm}$ and
    $\bconst \leq (\sqrt{10})^{\rank \redm}$ respectively.
    To finish we note that $\redm$ has $\leq 3n$ columns\footnote{%
        When we begin the tree traversal algorithm the matrix $\redm$ has
        \emph{precisely} $3n$ columns, but recall from
        Section~\ref{s-algm-feasibility} that we might delete columns from
        $\redm$ as we move through the type tree.}
    and so $\rank \redm \leq 3n$.
\end{proof}

A result of Tillmann \cite{tillmann08-finite} shows that $\rank M = n$
for any \emph{closed} 3-manifold triangulation, which allows us to
tighten our bounds further:

\begin{corollary} \label{c-bconst-closed}
    For an arbitrary closed 3-manifold triangulation we have
    $\bconst \leq (\sqrt{10})^n$, and
    for a closed orientable 3-manifold triangulation we have
    $\bconst \leq (\sqrt{6})^n$.
\end{corollary}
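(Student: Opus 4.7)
The plan is to combine Tillmann's rank result with the bound already established in Lemma~\ref{l-bconst-bounds}. Since that lemma gives $\bconst \leq (\sqrt{10})^{\rank \redm}$ in the general case and $\bconst \leq (\sqrt{6})^{\rank \redm}$ in the orientable case, all that remains is to show that $\rank \redm = n$ for closed triangulations, and then the two stated bounds follow immediately.

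First I would observe that $\rank \redm = \rank M$. This is because $\redm$ is obtained from $M$ by dividing each non-zero column by its (positive) greatest common divisor, i.e.\ by multiplying each column by a non-zero rational scalar. Scaling columns by non-zero scalars preserves the column space, and in particular preserves linear independence among the columns, so the rank is unchanged. Next I would invoke the result of Tillmann \cite{tillmann08-finite} already quoted in the paper, which asserts that $\rank M = n$ for every closed 3-manifold triangulation. Combining these two observations yields $\rank \redm = n$.

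Finally I would substitute $\rank \redm = n$ into the two inequalities of Lemma~\ref{l-bconst-bounds}: the general bound $\bconst \leq (\sqrt{10})^{\rank \redm}$ becomes $\bconst \leq (\sqrt{10})^n$, and the orientable bound $\bconst \leq (\sqrt{6})^{\rank \redm}$ becomes $\bconst \leq (\sqrt{6})^n$, as claimed. There is no real obstacle here: the proof is essentially a one-line consequence of Tillmann's theorem together with the rank-invariance of column scaling, inserted into the conclusion of the previous lemma.
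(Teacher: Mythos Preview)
Your proposal is correct and follows exactly the paper's approach: the corollary is stated as an immediate consequence of Tillmann's result that $\rank M = n$ for closed triangulations, substituted into Lemma~\ref{l-bconst-bounds}. Your extra remark that $\rank \redm = \rank M$ (since column scaling by non-zero scalars preserves rank) is a detail the paper leaves implicit, but it is entirely appropriate to spell it out.
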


\begin{remark}
    In fact these bounds are general:
    both $\bconst \leq (\sqrt{6})^n$ for the orientable case
    and $\bconst \leq (\sqrt{10})^n$ otherwise hold for
    triangulations with boundary also.  The argument relies on a proof that
    $\rank M = e - v \leq n$, where $e$ and $v$ are the number of edges
    and vertices of the triangulation that do not lie within the boundary.
    This proof involves topological techniques beyond the scope of this
    paper; see \cite{burton10-extreme} for the full details.
\end{remark}

We can now use $\bconst$ to bound the rational numbers that
appear as we perform the feasibility test
(Algorithms~\ref{a-feasibility-framework} and~\ref{a-feasibility-details}).
Our main tool is
\emph{Hadamard's inequality}, which states that if $A$ is any
square matrix, $|\det A|$ is bounded above by the product of the Euclidean
lengths of the columns of $A$.

\begin{theorem} \label{t-matrix-bounds}
    Let $\basis \subseteq \{1,\ldots,3n\}$ be any basis as described in
    Section~\ref{s-algm-feasibility}, and let $\basisinv$ be the corresponding
    $k \times k$ inverse matrix, where $k = \rank \redm$.
    Then $\basisinv$ can be expressed in the form $\basisinv=N/\Delta$, where
    $\Delta$ is an integer with $|\Delta| \leq \bconst$,
    and where $N$ is an integer matrix with $|n_{i,j}| \leq \bconst$
    for every $i,j$.
\end{theorem}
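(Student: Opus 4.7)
The plan is to use the classical adjugate formula $\basisinv = \adj(\basismat)/\det(\basismat)$, setting $\Delta = \det(\basismat)$ and $N = \adj(\basismat)$. Since $\basismat$ is an integer matrix (its columns are columns of the integer matrix $\redm$), both $\Delta$ and the entries of $N$ are automatically integers, so the decomposition $\basisinv = N/\Delta$ has the required form. All that remains is to prove the two numerical bounds.

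To bound $|\Delta|$, I would invoke Hadamard's inequality directly: $|\det \basismat|$ is at most the product of the Euclidean lengths of the $k$ columns of $\basismat$. Since these columns are a subset of the columns of $\redm$, this product is at most the product of the $k$ largest column lengths of $\redm$, which is exactly $\bconst$. Hence $|\Delta| \leq \bconst$.

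For the entries of $N$, each $n_{i,j}$ is (up to sign) the determinant of a $(k-1)\times(k-1)$ matrix $B$ obtained from $\basismat$ by deleting one row and one column. Applying Hadamard's inequality to $B$, $|\det B|$ is at most the product of the Euclidean lengths of its $k-1$ columns. Each column of $B$ is a column of $\basismat$ with one entry deleted, so its length does not exceed that of the corresponding column of $\basismat$, which is itself a column length of $\redm$. Consequently $|\det B|$ is bounded by the product of some $k-1$ column lengths of $\redm$, and hence by the product $\ell_1 \ell_2 \cdots \ell_{k-1}$ of the $k-1$ largest such lengths.

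The only delicate step — and essentially the whole difficulty of the proof — is upgrading this to the target bound $\bconst = \ell_1 \ell_2 \cdots \ell_k$. The key observation is that $\rank \redm = k$, so $\redm$ has at least $k$ non-zero columns, each of which (by construction of $\redm$) is an integer vector with greatest common divisor $1$ and therefore has Euclidean length at least $1$. This forces $\ell_k \geq 1$, giving $\ell_1 \cdots \ell_{k-1} \leq \ell_1 \cdots \ell_{k-1}\ell_k = \bconst$ and completing the bound $|n_{i,j}| \leq \bconst$.
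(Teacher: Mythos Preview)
Your proof is correct and follows essentially the same route as the paper's: the adjugate formula $\basisinv = \adj(\basismat)/\det(\basismat)$ together with Hadamard's inequality. The paper is terser, merely asserting that ``a similar argument'' bounds each $|n_{i,j}|$ by $\bconst$; you have made explicit the point it glosses over, namely that $\ell_k \geq 1$ (because $\redm$ has at least $k$ non-zero integer columns of $\gcd$ one), which is needed to pass from a product of $k-1$ column lengths to $\bconst$.
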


Before proving this result, it is important to note that it holds for
\emph{any} basis---not just initial bases or feasible bases, but every
basis that we pivot through along the way.  This means that we can use
Theorem~\ref{t-matrix-bounds} to place an upper bound on every integer
that appears in every intermediate calculation, enabling us
to use native machine integer types instead of arbitrary precision
arithmetic when $\bconst$ is of a reasonable size.

For example, consider the Weber-Seifert space, which is mentioned
in the introduction and has been considered one of the benchmarks for
progress in computational topology.  Using the 23-tetrahedron triangulation
described in \cite{burton12-ws} we find that $\bconst=2^{23}$,
which allows us to store the matrix $\basisinv$ using native (and very fast)
32-bit integers.\footnote{%
    Of course we must be careful: for instance, row operations of the
    form $\mathbf{x} \gets \lambda \mathbf{x} + \mu \mathbf{y}$ must be
    performed using 64-bit arithmetic, even though the inputs and outputs
    are guaranteed to fit into 32-bit integers.}

\begin{proof}[Proof of Theorem~\ref{t-matrix-bounds}]
    Using the adjoint formula for a matrix inverse we have
    $\basisinv = N/\Delta$, where $N = \adj \basismat$ and
    $\Delta = \det \basismat$.
    Because $\redm$ is an integer matrix it is clear that $\Delta$ and all of
    the entries $n_{i,j}$ are integers.

    By Hadamard's inequality, $|\Delta|$ is bounded above by the product of
    the Euclidean lengths of the $k$ columns of $\basismat$, and
    it follows from Definition~\ref{d-bconst} that $|\Delta| \leq \bconst$.
    Since each adjoint entry $n_{i,j}$ is a subdeterminant of
    $\basismat$, a similar argument shows that $|n_{i,j}| \leq \bconst$
    for each $i,j$.
\end{proof}

To finish this section, we use Theorem~\ref{t-matrix-bounds} to
bound the coordinates of any admissible vertex $\mathbf{v} \in \qproj$.
More precisely, we bound the coordinates of $\mathbf{u}=\lambda \mathbf{v}$,
where $\mathbf{u} \in \qcone$ is the smallest \emph{integer}
multiple of $\mathbf{v}$.  This smallest integer multiple has a precise
meaning for topologists (essentially, each integer $u_i$ corresponds to a
collection of $u_i$ quadrilaterals embedded in some tetrahedron of the
triangulation; see \cite{burton09-convert,tollefson98-quadspace} for
full details).

The first such bound was due to Hass, Lagarias and Pippenger
\cite{hass99-knotnp}, who proved that $|u_i| \leq 128^n/2$.
Their result applies only to true simplicial complexes (not the
more general triangulations that we allow here),
and under their assumptions
the matching equation matrix $M$ is much simpler (containing only
$0$ and $\pm1$ entries).

The only other bound known to date appears in unpublished notes by
Burton, Jaco, Letscher and Rubinstein (c.\,2001 and referenced in
\cite{jaco02-algorithms-essential}), where it
is shown that $|u_i| \leq (\sqrt{8})^n$ for a one-vertex closed orientable
triangulation (this time generalised triangulations are allowed).%
\footnote{Both of these previously-known bounds
    were derived in standard coordinates ($\R^{7n}$), not quadrilateral
    coordinates ($\R^{3n}$).  However, it is simple to convert between
    coordinate systems \cite{burton09-convert},
    and it can be shown that the upper bounds differ by a factor
    of at most $4n$.}

In Corollary~\ref{c-vertex-bounds} we obtain new bounds on $|u_i|$
in terms of the bounding constant $\bconst$, and in
Corollary~\ref{c-vertex-bounds-closed} we use this to bound $|u_i|$ in terms
of $n$ alone, yielding significant improvements to both the
Hass et~al.\ and Burton et~al.\ bounds outlined above.

\begin{corollary} \label{c-vertex-bounds}
    Let $\mathbf{u}$ be the smallest integer multiple of an admissible
    vertex $\mathbf{v} \in \qproj$.
    Then each coordinate $u_i$ is bounded as follows:
    \begin{itemize}
        \item $|u_i| \leq (4nk+2)\cdot\bconst$ if the triangulation is
        orientable;
        \item $|u_i| \leq (36nk+12)\cdot\bconst$ otherwise,
    \end{itemize}
    where $k = \rank M = \rank \redm$.
\end{corollary}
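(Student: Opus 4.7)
The plan is to extract from the dual-simplex state at the leaf labelled $\tau(\mathbf{v})$ an explicit integer vector proportional to $\mathbf{v}$, bound its coordinates using Theorem~\ref{t-matrix-bounds} and Lemma~\ref{l-matching-sparse}, and then use the fact that $\mathbf{u}$ is the \emph{smallest} integer multiple of $\mathbf{v}$ to conclude that $|u_i|$ is bounded by the corresponding coordinate of this construction.

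By the proof of Theorem~\ref{t-algm}, the leaf labelled $\tau(\mathbf{v})$ passes the feasibility test, yielding a feasible basis $\basis$ of size $k$ and a solution $\mathbf{x}' \in \R^{3n}$ satisfying $\mathbf{x}' \geq \mathbf{0}$, $\redm\mathbf{x}' = \mathbf{b}$, and $x'_i = 0$ for $i \in I$, where $\mathbf{b} = -\sum_{j \in J} \redm_j$. Reversing the substitution of Lemma~\ref{l-feasibility-subtract} produces $\mathbf{w} = \mathbf{x}' + \mathbf{e}_J$ with $\redm\mathbf{w} = \mathbf{0}$, and reversing the column reduction of Lemma~\ref{l-feasibility-reduce} produces $\mathbf{V} = (d_1 w_1, \ldots, d_{3n} w_{3n})$ with $M\mathbf{V} = \mathbf{0}$, $\mathbf{V} \geq \mathbf{0}$, and the same zero pattern as $\tau(\mathbf{v})$. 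Because $\mathbf{v}$ is a vertex of $\qproj$, the set $\{\mathbf{y} \in \qcone \mid y_i = 0 \text{ for all } i \in I\}$ is a $1$-dimensional ray through $\mathbf{v}$, so $\mathbf{V} = c\,\mathbf{v}$ for some $c > 0$. Writing $\mathbf{w} = \hat{\mathbf{w}}/\Delta$ with $\hat{\mathbf{w}}$ integral, the vector $\Delta\mathbf{V} = (d_1 \hat{w}_1, \ldots, d_{3n} \hat{w}_{3n})$ is an integer multiple of $\mathbf{v}$; since $\mathbf{u}$ is the smallest such multiple and $\mathbf{v} \geq \mathbf{0}$, we obtain $0 \leq u_i \leq d_i |\hat{w}_i|$ componentwise.

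To bound $d_i |\hat{w}_i|$, I would apply Theorem~\ref{t-matrix-bounds} to write $\basisinv = N/\Delta$ with $|\Delta|, |n_{ij}| \leq \bconst$, so that $|\hat{w}_i| = |(N\mathbf{b})_r + \Delta[i \in J]| \leq k\bconst \cdot \max_j |b_j| + \bconst$ for the appropriate basis row $r$. Since $|J| \leq n$ (at most one non-zero entry per triple of quadrilateral coordinates), we have $|b_j| \leq n \cdot \max_{r,\ell} |\redm_{r,\ell}|$, and a case analysis of the possible column patterns allowed by Lemma~\ref{l-matching-sparse} bounds both $d_i$ and $\max |\redm|$ separately in the orientable and non-orientable settings. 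Multiplying out the contributions from $d_i$, $\bconst$, $n$, $k$, and the reduced-matrix entries yields the two coefficients $4nk+2$ and $36nk+12$.

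The hardest point is the identification $\mathbf{V} = c\,\mathbf{v}$: one must verify that the particular vector extracted from the feasibility test really does land on the same ray of $\qcone$ as $\mathbf{v}$, rather than on some other extreme ray sharing the same support. This is where the vertex hypothesis on $\mathbf{v}$ is essential — via Lemma~\ref{l-type-reconstruct} it guarantees one-dimensionality of the relevant face. The remainder is careful bookkeeping of three sources of multiplicative scaling (the denominator-clearing factor $\Delta$, the $\redm \mapsto M$ conversion factor $d_i$, and the implicit proportionality constant $c$), handled with the aid of the orientable/non-orientable split on column entries.
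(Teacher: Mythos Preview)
Your overall strategy mirrors the paper's proof: extract the feasible basis at the leaf, undo the substitution of Lemma~\ref{l-feasibility-subtract}, undo the column reduction of Lemma~\ref{l-feasibility-reduce}, and bound the resulting integer vector via Theorem~\ref{t-matrix-bounds}. The identification $\mathbf{V}=c\mathbf{v}$ via one-dimensionality of the minimal face is fine.

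However, you reverse Lemma~\ref{l-feasibility-reduce} in the wrong direction. Since $\redm$ is obtained from $M$ by dividing column $i$ by $d_i$, we have $\redm\mathbf{w}=M\cdot(w_1/d_1,\ldots,w_{3n}/d_{3n})^{\!\top}$. Hence $\redm\mathbf{w}=\mathbf{0}$ implies $M\mathbf{V}=\mathbf{0}$ for $V_i=w_i/d_i$, \emph{not} $V_i=d_i w_i$ as you wrote; with your formula $M\mathbf{V}\neq\mathbf{0}$ in general. The paper (following the proof of Lemma~\ref{l-feasibility-reduce}) takes $z_i=D\,w_i/d_i$ with $D=\lcm(d_1,\ldots,d_{3n})$, the extra factor $D$ being exactly what is needed to make $\Delta\mathbf{z}$ an integer vector. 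It is this $D$ (bounded by $2$ in the orientable case and $12$ otherwise, via Lemma~\ref{l-matching-sparse}) that produces the coefficients $4nk+2$ and $36nk+12$. Your scheme happens to hit $4nk+2$ in the orientable case only by the coincidence $\max_i d_i = D = 2$; in the non-orientable case your formula, even if it were valid, would give $4(3nk+1)=12nk+4$, not $36nk+12$. So the final ``multiplying out'' paragraph is not just hand-wavy but rests on an incorrect transformation; fix the direction of the column-reduction reversal and carry the factor $D$ explicitly.
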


\begin{proof}
    This proof is a simple (though slightly messy) matter of starting
    with Theorem~\ref{t-matrix-bounds} and following back through the
    various transformations and changes of variable until we reach
    our solution $\mathbf{v} \in \qproj$ to the original matching equations.

    Let $\tau$ be the type vector of the vertex $\mathbf{v}$, and let
    $\mathbf{x} \in \R^{3n}$
    be the corresponding solution to the feasibility test as found by
    the dual simplex method in Algorithm~\ref{a-feasibility-framework}
    (so that $\mathbf{x} \geq \mathbf{0}$ and $\redm\mathbf{x}=\mathbf{b}$).
    Let $\basis$ be the corresponding feasible basis found by the dual
    simplex method, so that every non-zero entry in $\mathbf{x}$
    appears as an entry of the vector $\basisinv\mathbf{b}$.

    From Lemma~\ref{l-feasibility-subtract} we recall that
    $\mathbf{b}$ is the negative sum of at most $n$ columns
    of $\redm$.  Following the same argument used in the proof of
    Lemma~\ref{l-bconst-bounds},
    each non-zero entry in $\redm$ is either $\pm1$, $\pm2$ or $\pm3$,
    and for an orientable triangulation just $\pm1$ or $\pm2$.
    It follows that each $|b_i| \leq 2n$ if our triangulation is
    orientable, and each $|b_i| \leq 3n$ otherwise.

    We return now to the vector $\mathbf{x}$.
    As in Theorem~\ref{t-matrix-bounds}, let $\basisinv=N/\Delta$, where
    $\Delta$ is an integer with $|\Delta| \leq \bconst$,
    and where each $n_{i,j}$ is an integer with
    $|n_{i,j}| \leq \bconst$.
    Every non-zero element of $\mathbf{x}$ appears in the vector
    $\basisinv\mathbf{b}$, and is therefore of the form
    $\sum_{j=1}^k n_{i,j} b_j / \Delta$.
    Combining all of the bounds above, we deduce that
    $\mathbf{x}=\mathbf{x}'/\Delta$, where
    $\mathbf{x}'$ is an integer vector and where each
    $|x'_i| \leq 2nk\bconst$ if our triangulation is orientable, or
    $|x'_i| \leq 3nk\bconst$ otherwise.

    Our next task is to step backwards through
    Lemma~\ref{l-feasibility-subtract}.  Our current vector $\mathbf{x}$
    satisfies the second statement of Lemma~\ref{l-feasibility-subtract};
    let $\mathbf{y}$ be the corresponding solution for the first
    statement, so that
    $\mathbf{y} \geq \mathbf{0}$, $\redm\mathbf{y}=\mathbf{0}$,
    and $\mathbf{y}$ satisfies the type constraints for $\tau$.
    As in the proof of Lemma~\ref{l-feasibility-subtract}, each
    $y_i = x_i$ or $x_i + 1$.  It follows that
    $\mathbf{y}=\mathbf{y}'/\Delta$, where
    $\mathbf{y}'$ is an integer vector and where each
    $|y'_i| \leq 2nk\bconst + \Delta$ if our triangulation is orientable, or
    $|y'_i| \leq 3nk\bconst + \Delta$ otherwise.

    Finally we step backwards through Lemma~\ref{l-feasibility-reduce}.
    Our vector $\mathbf{y}$ satisfies the second statement of
    Lemma~\ref{l-feasibility-reduce}; let $\mathbf{z}$ be the
    corresponding solution for the first statement, so that
    $\mathbf{z} \geq \mathbf{0}$, $M\mathbf{z} = \mathbf{0}$, and
    $\mathbf{z}$ satisfies the type constraints for $\tau$.
    Following the proof of Lemma~\ref{l-feasibility-reduce},
    each $z_i = Dy_i/d_i$, where $d_i$ is the $\gcd$ of the $i$th column
    of $M$, and where $D=\lcm(d_1,\ldots,d_{3n})$.  By
    Lemma~\ref{l-matching-sparse} we have $D \leq 2$ for an orientable
    triangulation and $D \leq 12$ otherwise.  Therefore
    $\mathbf{z}=\mathbf{z}'/\Delta$, where
    $\mathbf{z}'$ is an integer vector and where each
    $|z'_i| \leq 4nk\bconst + 2\Delta$ if our triangulation is orientable, or
    $|z'_i| \leq 36nk\bconst + 12\Delta$ otherwise.

    To conclude, we observe that our vertex $\mathbf{v}$ is a multiple
    of $\mathbf{z}$, and so the entries in the smallest integer
    multiple $\mathbf{u}$ are bounded by the entries in the (possibly
    larger) integer multiple $\mathbf{z}'$.  In particular,
    $|u_i| \leq 4nk\bconst + 2\Delta \leq (4nk+2) \cdot \bconst$
    if our triangulation is orientable, or
    $|u_i| \leq 36nk\bconst + 12\Delta \leq (36nk + 12) \cdot \bconst$
    otherwise.
\end{proof}

We can recast these bounds purely in terms of $n$: for
$\bconst$ we use Lemma~\ref{l-bconst-bounds}, and for the rank $k$
we use Tillmann's theorem that $\rank M = n$ for closed triangulations
\cite{tillmann08-finite}, or the
observation that $\rank M \leq 3n$ for bounded triangulations.
The result is the following.

\begin{corollary} \label{c-vertex-bounds-closed}
    Let $\mathbf{u}$ be the smallest integer multiple of an admissible
    vertex of $\qproj$.
    Then each coordinate $u_i$ is bounded as follows:
    \begin{itemize}
        \item $|u_i| \leq (4n^2+2)\cdot(\sqrt{6})^n$
        if the triangulation is closed and orientable;
        \item $|u_i| \leq (36n^2+12)\cdot(\sqrt{10})^n$
        if the triangulation is closed and non-orientable;
        \item $|u_i| \leq (12n^2+2)\cdot(\sqrt{6})^{3n}$
        if the triangulation is bounded and orientable;
        \item $|u_i| \leq (108n^2+12)\cdot(\sqrt{10})^{3n}$
        if the triangulation is bounded and non-orientable.
    \end{itemize}
\end{corollary}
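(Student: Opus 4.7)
The proof will be a direct substitution combining three ingredients already in hand: the coordinate bound from Corollary~\ref{c-vertex-bounds}, the bound on $\bconst$ from Lemma~\ref{l-bconst-bounds}, and known bounds on $k = \rank M = \rank \redm$. Since Corollary~\ref{c-vertex-bounds} already expresses $|u_i|$ as a polynomial in $n$ and $k$ multiplied by $\bconst$, and Lemma~\ref{l-bconst-bounds} already gives $\bconst$ as an exponential in $\rank \redm$, the only thing left is to eliminate $k$ in favour of $n$.

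First I would split into the four cases and apply Corollary~\ref{c-vertex-bounds}: in the orientable cases this yields $|u_i| \leq (4nk+2)\bconst$, and in the non-orientable cases $|u_i| \leq (36nk+12)\bconst$. Next I would substitute the bound on $\bconst$: in the orientable cases $\bconst \leq (\sqrt{6})^k$, and in the non-orientable cases $\bconst \leq (\sqrt{10})^k$ (both from Lemma~\ref{l-bconst-bounds}, using that $k = \rank \redm$).

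Finally, I would invoke the rank bounds on $k$. For closed triangulations, Tillmann's theorem \cite{tillmann08-finite} gives $\rank M = n$, so substituting $k=n$ produces the exponents $(\sqrt{6})^n$ and $(\sqrt{10})^n$ and the polynomial factors $(4n^2+2)$ and $(36n^2+12)$ respectively. For bounded triangulations we use only the trivial observation that $\rank M \leq 3n$ (since $M$ has $3n$ columns); substituting $k \leq 3n$ replaces $(\sqrt{6})^n$ by $(\sqrt{6})^{3n}$, replaces $(\sqrt{10})^n$ by $(\sqrt{10})^{3n}$, and changes the polynomial factors to $(12n^2+2)$ and $(108n^2+12)$ respectively. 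Matching the four cases of Corollary~\ref{c-vertex-bounds-closed} completes the proof.

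There is essentially no obstacle here; the result is a packaging of prior bounds, and the only care required is to keep track of which base ($\sqrt{6}$ or $\sqrt{10}$) and which rank bound ($n$ or $3n$) applies in each of the four orientability/boundary combinations, and to verify that both $(\sqrt{6})^k$ and $(\sqrt{10})^k$ are monotone in $k$ so that substituting the upper bound $k \leq 3n$ in the bounded case is valid.
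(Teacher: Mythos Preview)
Your proposal is correct and follows essentially the same approach as the paper: the paper likewise obtains the corollary by combining Corollary~\ref{c-vertex-bounds} with the bound on $\bconst$ from Lemma~\ref{l-bconst-bounds}, and then eliminating $k$ using Tillmann's result $\rank M = n$ in the closed case and the trivial bound $\rank M \leq 3n$ in the bounded case.
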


\begin{remark}
    These bounds are significant improvements over previous results.
    For bounded triangulations, we improve the Hass et~al.\ bound of
    $O(128^n)$ to $O[n^2(\sqrt{10})^{3n}] \simeq O(n^2\, 31.6^n)$, whilst
    removing the requirement for a true simplicial complex.
    For closed orientable triangulations, we improve the Burton et~al.\ bound
    of $O[(\sqrt{8})^n]$ to $O[n^2 (\sqrt{6})^n]$, whilst removing the
    requirement for a one-vertex triangulation.

    As with Corollary~\ref{c-bconst-closed},
    the stronger bounds that we obtain for closed triangulations
    can in fact be shown to hold for bounded triangulations also.
    Again this relies on a proof that $\rank M = e - v \leq n$,
    which calls on techniques beyond the scope of this paper.
    See \cite{burton10-extreme} for details.
\end{remark}

\section{Time and space complexity} \label{s-complexity}

In this section we derive theoretical bounds on the time and space
complexity of normal surface enumeration algorithms.  We compare these
bounds for the double description method (the prior state of the art,
as described in \cite{burton10-dd})
and the tree traversal algorithm that we introduce in this paper.

All of these bounds are driven by exponential factors,
and it is these exponential factors that we focus on here.
We replace any polynomial factors with a generic polynomial $\polyn$;
for instance, a complexity bound of $O(4^n n^3)$ would simply be
reported as $O(4^n \polyn)$ here.
For both algorithms it is easy to show that these polynomial factors
are of small degree.

We assume that all arithmetical operations can be performed
in polynomial time.  For the tree traversal algorithm this is a simple
consequence of Theorem~\ref{t-matrix-bounds}, and for the double
description method it can likewise be shown that all integers have
$O(n)$ bits.  We omit the details here.

Theorem~\ref{t-complexity-dd} analyses the double description method,
and Theorem~\ref{t-complexity-tree} studies the tree traversal
algorithm.  In summary, the double description method can be performed
in $O(16^n \polyn)$ time and $O(4^n \polyn)$ space, whereas the tree
traversal algorithm can be carried out in
$O(4^n|V|\polyn)$ time and $O(|V|\polyn)$ space,
where $|V|$ is the output size.  This is a significant reduction,
given that the output size for normal surface enumeration is often
extremely small in practice.\footnote{%
    Early indications of this appear in \cite{burton10-complexity}
    (which works in the larger space $\R^{7n}$),
    and a detailed study will appear in \cite{burton10-extreme}.
    To illustrate how extreme these results are in $\R^{3n}$:
    across all $139\,103\,032$ closed 3-manifold triangulations of
    size $n=9$, the maximum output size is just $37$, and the
    \emph{mean} output size is a mere $9.7$.}
If we reformulate these bounds in terms of $n$ alone, the tree traversal
algorithm requires $O(7^n \polyn)$ time, and approximately $O(3.303^n \polyn)$
space for closed triangulations or $O(4^n\polyn)$ space for bounded
triangulations.%

We do not give bounds in terms of $|V|$ for the double description
method, because it suffers from a well-known \emph{combinatorial
explosion}: even when the output size is small, the intermediate
structures that appear can be significantly (and exponentially) more
complex.  See \cite{avis97-howgood-compgeom} for examples of this in
theory, or \cite{burton09-convert} for experiments that show this
in practice for normal surface enumeration.
The result is that, even for bounded triangulations where both
algorithms have space complexities of $O(4^n\polyn)$, the tree traversal
algorithm can effectively exploit a small output size whereas the double
description method cannot.

In conclusion, both the time and space bounds for tree traversal are
notably smaller, particularly when the output size is small.
However, it is important to bear in mind that for both algorithms these
bounds may still be far above the ``real'' time and memory usage that we
observe in practice.  For this reason it is important to carry out
real practical experiments, which we do in Section~\ref{s-perf}.

As a final note, for this theoretical analysis we always assume
implementations that give the best possible complexity bounds.
For the tree traversal algorithm we assume interior point methods
for the feasibility test.
For the double description method we assume that adjacency of vertices
is tested using a polynomial-time algebraic rank test, rather than a
simpler combinatorial test that has worst-case exponential time but
excellent performance in practice \cite{burton10-dd,fukuda96-doubledesc}.
For the experiments of Section~\ref{s-perf} we revert to the dual simplex
method and combinatorial test respectively, both of which are known to
perform extremely well in practical settings
\cite{burton10-dd,fukuda96-doubledesc,megiddo86-selfdual,smale83-simplex}.

\begin{theorem} \label{t-complexity-dd}
    The double description method for normal surface enumeration
    can be implemented in worst-case $O(16^n \polyn)$ time and
    $O(4^n \polyn)$ space.
\end{theorem}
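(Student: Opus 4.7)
The plan is to bound time and space one stage at a time, then multiply by the polynomial number of stages. Recall that the double description method constructs a sequence $P_0, P_1, \ldots, P_e$ where $P_0$ is the unit simplex in $\R^{3n}$, each $P_{i+1}$ is obtained from $P_i$ by intersecting with one matching-equation hyperplane, and at each stage we discard vertices that fail the quadrilateral constraints. Since $e \in O(\polyn)$, it suffices to show that each stage can be implemented in $O(16^n \polyn)$ time and that each intermediate admissible vertex list fits in $O(4^n \polyn)$ space.

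First I would establish the space bound by showing that each $P_i$ has at most $4^n$ admissible vertices. The key point is to adapt Lemma~\ref{l-type-reconstruct} to intermediate polytopes: if $\mathbf{x}$ is an admissible vertex of $P_i$ with type vector $\tau$, then $\mathbf{x}$ is the unique solution to the system consisting of $\sum x_j = 1$, the first $i$ matching equations, and the equalities $x_j = 0$ dictated by $\tau$ via Definition~\ref{d-type}. This is because $\mathbf{x}$ is a $0$-dimensional face of $P_i$, so the tight supporting hyperplanes at $\mathbf{x}$ must have a unique common solution. Hence distinct admissible vertices of $P_i$ have distinct type vectors, so there are at most $4^n$ of them. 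Since all rational coordinates have $O(\polyn)$ bits (as noted in the introduction to this section), the vertex list occupies $O(4^n \polyn)$ space.

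Next I would analyse a single stage. Moving from $P_i$ to $P_{i+1}$ proceeds in the standard double description fashion: partition the existing vertex list according to the sign of the $(i+1)$-th matching equation into positive, negative, and zero parts; for each pair consisting of one positive and one negative vertex, test whether they are adjacent in $P_i$ using the polynomial-time algebraic rank test; if they are adjacent, form the unique convex combination lying on the new hyperplane to obtain a candidate vertex of $P_{i+1}$; finally, filter all candidates (together with the inherited zero-side vertices) through the quadrilateral constraints. With at most $4^n$ vertices on each side, we process $O(16^n)$ pairs, each of which contributes $O(\polyn)$ work for the adjacency test and the intersection computation. The filtering step runs in $O(\polyn)$ per candidate, and there are $O(16^n)$ candidates. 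The whole stage therefore runs in $O(16^n \polyn)$ time, and multiplying by the $O(\polyn)$ stages yields the claimed overall bound.

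The main obstacle is justifying the uniqueness-of-type-vector argument for intermediate polytopes, since the reasoning in Lemma~\ref{l-type-reconstruct} is phrased for the final polytope $\qproj$; I would need to confirm that the same tight-hyperplane argument goes through when only $i$ of the matching equations have been imposed, which follows from the elementary polytope-theoretic fact that a vertex of any polyhedron is uniquely determined by the intersection of its tight supporting hyperplanes. A minor secondary obstacle is verifying the $O(\polyn)$ bit complexity for intermediate coordinates (so that arithmetic really is polynomial-time); this mirrors the treatment in Theorem~\ref{t-matrix-bounds} for the tree traversal algorithm but must be reworked for the double description context, and I would defer the details to an appendix or citation rather than reproducing them here.
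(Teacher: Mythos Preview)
Your proposal is correct and follows essentially the same argument as the paper: bound the number of admissible vertices of each intermediate polytope $P_i$ by $4^n$ via the observation that distinct admissible vertices have distinct zero-sets (equivalently, distinct type vectors), then obtain $O(16^n\polyn)$ time per stage from the $O(v_{i-1}^2)$ pairwise work and multiply by the $O(n)$ stages. The paper handles your first flagged obstacle by citing \cite{burton10-dd} for the uniqueness-of-zero-set fact in intermediate polytopes, and handles your second obstacle exactly as you suggest---stating without proof that intermediate integers have $O(n)$ bits.
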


\begin{proof}
    As outlined in Section~\ref{s-prelim}, the double description method
    constructs a series of polytopes $P_0,\ldots,P_e \subseteq \R^{3n}$,
    where each $P_i$ is the intersection of the unit simplex with the
    first $i$ matching equations.  There are $e+1 \in O(n)$ such polytopes
    in total, and with an algebraic adjacency test we can construct each
    $P_i$ in $O(v_{i-1}^2 \polyn)$ time and $O(v_{i-1} \polyn)$ space,
    where $v_{i-1}$ is the number of vertices of $P_{i-1}$ that
    satisfy the quadrilateral constraints \cite{burton10-dd}.

    The main task then is to estimate each $v_i$.
    In every polytope $P_i$, each vertex is uniquely determined by which
    of its coordinates are zero \cite{burton10-dd}.  There are $4^n$
    possible choices of zero coordinates that satisfy the quadrilateral
    constraints, and so each $v_i \leq 4^n$.
    This gives a time complexity of $O(16^n\polyn)$ and
    space complexity of $O(4^n\polyn)$ for the algorithm overall.
\end{proof}

It should be noted that there are other ways of estimating the vertex
counts $v_i$.  For the final polytope, Burton \cite{burton11-asymptotic}
proves that $v_e \in O(3.303^n)$ for a closed triangulation;
however, this result does not hold for
the intermediate vertex counts $v_1,\ldots,v_{e-1}$.
McMullen's theorem \cite{mcmullen70-ubt} gives a tight bound on the
number of vertices of an arbitrary polytope, but here it only yields
$v_i \in O(4.24^n)$; 
the reason it exceeds $4^n$ is that it
does not take into account the quadrilateral constraints.

\begin{theorem} \label{t-complexity-tree}
    The tree traversal algorithm for normal surface enumeration can be
    implemented in worst-case $O(4^n|V|\polyn)$ time and $O(|V|\polyn)$ space,
    where $|V|$ is the number of admissible vertices of $\qproj$.
    In terms of $n$ alone, the worst-case time complexity is
    $O(7^n \polyn)$, and the worst-case space complexity is
    $O\left(\left[\frac{3+\sqrt{13}}{2}\right]^n \polyn\right) \simeq
    O(3.303^n \polyn)$ for closed triangulations or
    $O(4^n\polyn)$ for bounded triangulations.
\end{theorem}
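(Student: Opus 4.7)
The plan is to analyse the per-node cost of processing a node in the type tree and then sum this cost over all nodes that could be processed. First I would observe that at any processed node the three tests are all cheap: the zero test is trivial, the feasibility test runs in $O(\polyn)$ via an interior point method, and by Section~\ref{s-algm-domination} the trie-based domination test runs in $O(\min\{n|V|,\,n 2^k\})$ time, where $k$ is the number of non-zero entries in the partial type vector at the current node. Theorem~\ref{t-matrix-bounds} guarantees that all intermediate arithmetic stays polynomial, so each of these costs really is as claimed.

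For the bound $O(4^n|V|\polyn)$ I would use the loose form $O(n|V|)$ of the domination cost. Since the whole type tree has $1+4+\cdots+4^n=O(4^n)$ nodes and each processed node therefore costs $O(|V|\polyn)$ work, multiplying yields the stated time bound. For the sharper $O(7^n\polyn)$ bound in $n$ alone, I would instead use the $O(n 2^k)$ form. At depth $i$ the type tree contains exactly $\binom{i}{k}3^k$ nodes whose partial type vector has $k$ non-zero entries (choose the $k$ non-zero positions, and then one of $\{1,2,3\}$ for each). The total work summed over the entire type tree is then at most
\[
\polyn\sum_{i=0}^{n}\sum_{k=0}^{i}\binom{i}{k}3^{k}\cdot 2^{k}
\;=\;\polyn\sum_{i=0}^{n}7^{i}\;=\;O(7^{n}\polyn),
\]
by the binomial identity $\sum_{k=0}^{i}\binom{i}{k}6^{k}=7^{i}$, and the set of processed nodes is a subset of this, so the same bound applies to the algorithm.

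For the space bound $O(|V|\polyn)$ the dominant structure is the solution set $V$ itself, stored as the trie of Section~\ref{s-algm-domination} with at most $n|V|$ nodes of constant size. In addition, the depth-first recursion holds at most $n$ active frames, each carrying an incremental update of a basis $\basis$ together with the matrix $\basisinv$; by Theorem~\ref{t-matrix-bounds} each entry of $\basisinv$ fits in polynomial space, so the stack contributes only $O(\polyn)$ and is absorbed into the $O(|V|\polyn)$ total. To convert these to bounds in $n$ alone I would substitute the closed-triangulation bound $|V|\le O\!\left(\left[\tfrac{3+\sqrt{13}}{2}\right]^{n}\right)$ from \cite{burton11-asymptotic}, and the trivial $|V|\le 4^{n}$ in the bounded case.

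The main obstacle is setting up the domination-cost sum so that the binomial identity produces the clean base $7$; once the counting of nodes-by-depth-and-weight is in place, the rest is simply plugging in Theorem~\ref{t-matrix-bounds} for arithmetic, the trie analysis of Section~\ref{s-algm-domination} for domination, and the known asymptotic bound on $|V|$ for the final rewriting.
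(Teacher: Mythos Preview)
Your proposal is correct and follows essentially the same approach as the paper: bound the per-node cost by the domination test, use the $O(n|V|)$ form over $O(4^n)$ nodes for the first time bound, and use the $O(n2^k)$ form together with a binomial count for the $O(7^n\polyn)$ bound. The only cosmetic difference is that you sum depth-by-depth to get $\sum_{i=0}^n 7^i$, whereas the paper bounds the number of nodes with $k$ non-zero entries across all depths by $n\binom{n}{k}3^k$ and obtains $n^2\cdot 7^n$; the extra polynomial factor is absorbed into $\polyn$, so the arguments are equivalent.
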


\begin{proof}
    The space complexity is straightforward.
    We do not need to explicitly construct the type tree in memory in
    order to traverse it, and so the only non-polynomial space
    requirements come from storing the solution set $V$.  It follows
    that the tree traversal algorithm has space complexity $O(|V|\polyn)$.
    By the same argument as before we have $|V| \leq 4^n$ for arbitrary
    triangulations, and for closed triangulations
    a result of Burton \cite{burton11-asymptotic} shows that
    $|V| \in O\left(\left[\frac{3+\sqrt{13}}{2}\right]^n \right)$.

    The time complexity is more interesting.
    There are $O(4^n)$ nodes in the type tree, and the
    only non-polynomial operation that we perform on each node is the
    domination test.  As described in Section~\ref{s-algm-domination},
    for a node labelled with the partial type vector $\tau$
    the domination test takes $O(\min\{n|V|,\ n2^k\})$ time, where
    $k$ is the number of times the symbols $1$, $2$ or $3$ appear in $\tau$.
    It follows that the tree traversal algorithm
    runs in $O(4^n|V|\polyn)$ time.

    To obtain a complexity bound that does not involve $|V|$, a simple
    counting argument shows that at most $n \binom{n}{k} 3^k$ nodes in the
    type tree are labelled with a partial type vector $\tau$ with the
    property described above.
    Therefore the total running time is bounded by
    \[ O\left(\left[\sum_{k=0}^n n \binom{n}{k} 3^k \cdot n 2^k \right]
        \polyn \right) =
       O\left(\left[\sum_{k=0}^n \binom{n}{k} 6^k \right] \polyn \right)
       =
       O\left(7^n \polyn\right), \]
    using the binomial expansion
    $\sum_{k=0}^n \binom{n}{k} 6^k = (1+6)^n = 7^n$.
\end{proof}

\section{Experimental performance} \label{s-perf}

In this section we put our algorithm into practice.  We run it through a
test suite of 275 triangulations, based on the first 275 entries in the
census of knot complements tabulated by Christy and shipped with version~1.9
of {\snap} \cite{coulson00-snap}.

All 275 of these triangulations are \emph{bounded} triangulations.  We use
bounded triangulations because they are a stronger ``stress test'':
in practice it has been found that enumerating normal surfaces for
a bounded triangulation is often significantly more difficult than for a
closed triangulation of comparable size \cite{burton12-ws}.
In part this is because the number of admissible vertices of $\qproj$ is
typically much larger in the bounded case \cite{burton10-extreme}.

We begin this section by studying the growth of the number of admissible
vertices and the number of ``dead ends'' that we walk through in the
type tree, where we discover that the number of dead ends is far below the
theoretical bound of $O(4^n)$.
We follow with a direct comparison of running
times for the tree traversal algorithm and the double description
method, where we find that our new algorithm runs slower for smaller
problems but significantly faster for larger and more difficult problems.

\begin{figure}[htb]
    \centering
    \includegraphics[scale=0.6]{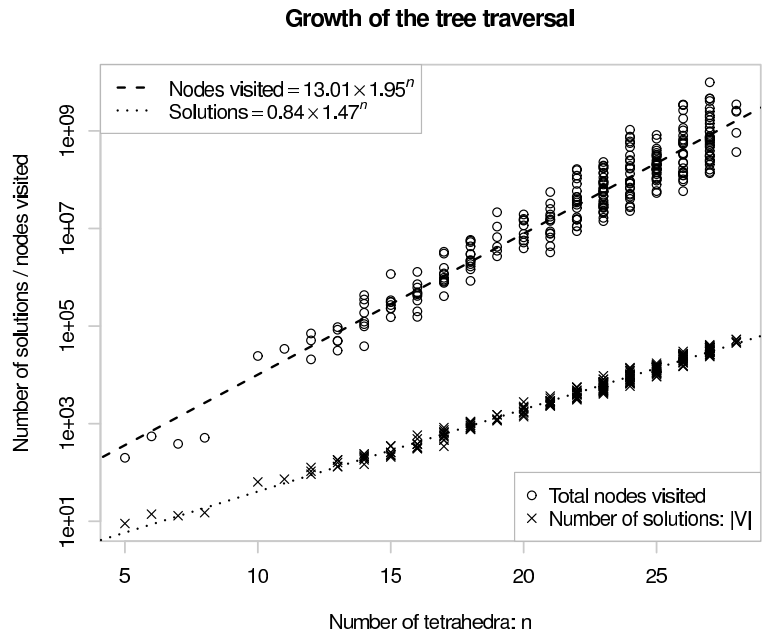}
    \caption{Counting solutions and dead ends for the tree traversal algorithm}
    \label{fig-perf-knots-combo}
\end{figure}

Figure~\ref{fig-perf-knots-combo} measures the ``combinatorial size'' of
the tree traversal: the crosses in the lower part of the graph represent
the final number of solutions,
and the circles in the upper part of the graph count the total
number of nodes that we visit (including solutions as well as dead ends).
In the theoretical analysis of Section~\ref{s-complexity} we estimate
both figures as $O(4^n)$,
but in practice we find that the real counts are significantly smaller.
The number of solutions appears to grow at a
rate of roughly $1.47^n$, and the number of nodes that we visit grows at
a rate of roughly $1.95^n$ (though with some variation).  The corresponding
regression lines are marked on the graph.\footnote{%
    These are standard linear regressions of the form
    $\log y = \alpha n + \beta$, where $y$ is the quantity being
    measured on the vertical axis.}

These small growth rates undoubtedly contribute to the strong performance
of the algorithm, which we discuss shortly.  However,
Figure~\ref{fig-perf-knots-combo} raises another intriguing possibility,
which is that the number of nodes that we visit might be \emph{polynomial
in the output size}.  Indeed, for every triangulation in our test suite,
the number of nodes that we visit is at most $10 |V|^2$ (where $|V|$
represents the number of solutions).  If this were true in general,
both the time and space complexity of the tree traversal algorithm
would become worst-case polynomial in the combined input and output size.
This would be a significant breakthrough for normal surface enumeration.
We return to this speculation in Section~\ref{s-conc}.

We come now to a direct comparison of running times for the tree
traversal algorithm and the double description algorithm.  Both
algorithms are implemented using the topological software package
{\regina} \cite{burton04-regina}.  In particular, the double description
implementation that we use is already built into {\regina}; this represents
the current state of the art for normal surface enumeration, and the details
of the algorithm have been finely tuned over many years
\cite{burton10-dd}.  Both algorithms are implemented in {\cpp}, and
all experiments were run on a single core of a 64-bit 2.3\,GHz AMD Opteron
2356 processor.

\begin{figure}[htb]
    \centering
    \includegraphics[scale=0.6]{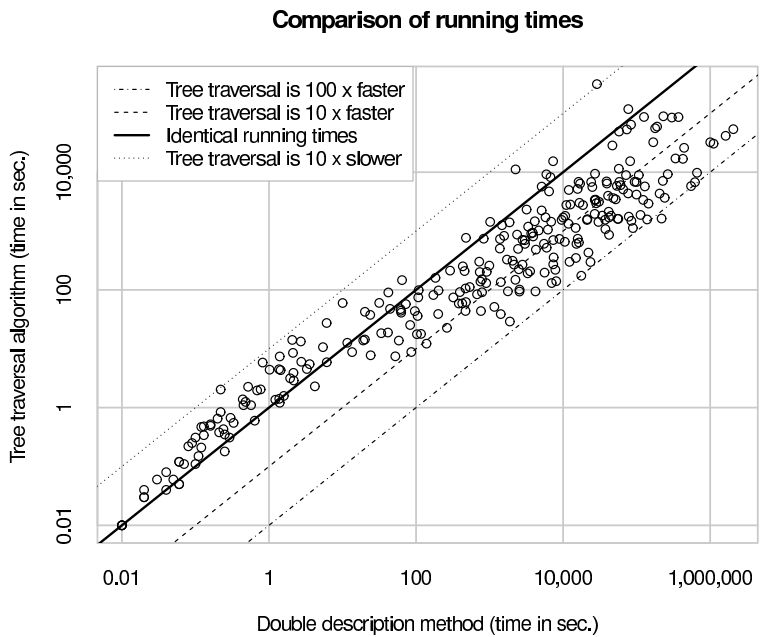}
    \caption{Comparing running times for the old and new algorithms}
    \label{fig-perf-knots}
\end{figure}

Figure~\ref{fig-perf-knots} directly compares the running times for both
algorithms: each point on the graph represents a single triangulation
from the test suite.  Both axes use a logarithmic scale.  The solid
diagonal line indicates where both algorithms have identical running
times, and each dotted line represents an order of magnitude of difference
between them.  Based on this graph we can make the following
observations:
\begin{itemize}
    \item Problems that are difficult for one algorithm are
    difficult for both.  That is, there are no points in the top-left or
    bottom-right regions of the graph.  This is not always the case for
    vertex enumeration problems in general \cite{avis97-howgood-compgeom}.

    \item For smaller problems, the tree traversal algorithm runs slower
    (in the worst case, around 10 times slower).  However, as the
    problems grow more difficult the tree traversal begins to dominate,
    and for running times over 100~seconds the tree traversal
    algorithm is almost always faster (in the best case, around 100
    times faster).
\end{itemize}

\begin{figure}[htb]
    \centering
    \includegraphics[scale=0.6]{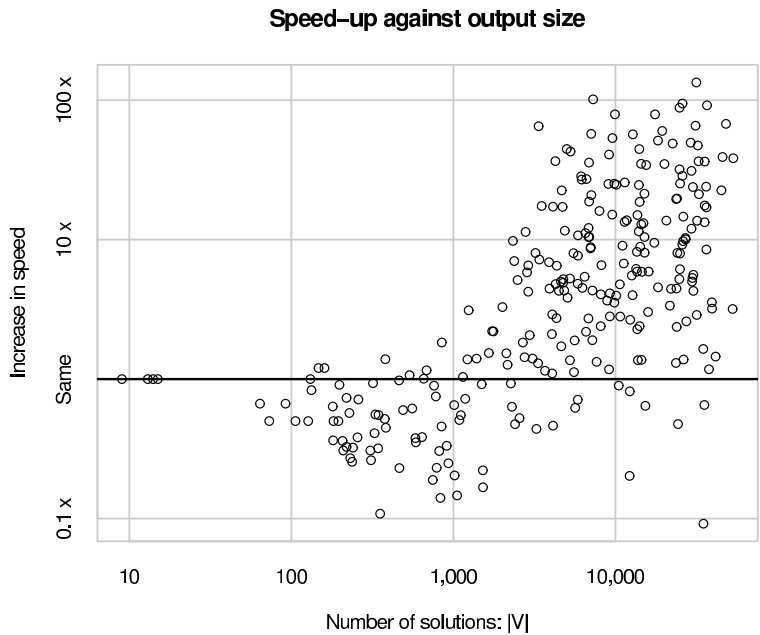}
    \caption{Measuring the performance improvement as the output size grows}
    \label{fig-perf-knots-speedup}
\end{figure}

Figure~\ref{fig-perf-knots-speedup} plots these speed-up factors
explicitly: again each point on the graph represents a single triangulation,
and the vertical axis measures the factor of improvement that we
gain from the tree traversal algorithm.\footnote{Specifically, the
vertical axis measures the double description running time divided
by the tree traversal running time.}
On the horizontal axis we plot the total number of solutions $|V|$,
which is a useful estimate of the difficulty of the problem.
Again, both axes use a logarithmic scale.

Here we see the same behaviour as before, but more visibly: for smaller
problems the tree traversal algorithm runs slower,
and then as the problems grow larger the tree traversal algorithm dominates,
running significantly faster in general for the most difficult cases.

\section{Discussion} \label{s-conc}

For topological decision problems that require lengthy normal surface
enumerations, we conclude that the tree traversal algorithm
is the most promising algorithm amongst those known to date.
Not only does it have small time and space complexities in both
theory and practice (though these complexities remain exponential),
but it also supports parallelisation, progress tracking,
incremental output and early termination.
As described in Section~\ref{s-algm-main}, both parallelisation and
incremental output offer great potential for normal surface theory, yet they
have not been explored in the literature to date.

One aspect of the tree traversal algorithm that has not been discussed
in this paper is the \emph{ordering of tetrahedra}.  By reordering the
tetrahedra we can alter which nodes of the type tree pass the feasibility
test, and thereby reduce the total number of nodes that we visit.
Because this total number of nodes is the source of the exponential running
time, this simple reordering of tetrahedra could have a great effect on the
time complexity (much like reordering the matching equations does in the
double description method \cite{avis97-howgood-compgeom,burton10-dd}).
Further exploration and experimentation in this area could prove beneficial.

We finish by recalling the empirical observations of Section~\ref{s-perf}
that the total number of nodes visited---and therefore the overall
running time---could be a small polynomial in the combined input and
output size.  The data presented in Section~\ref{s-perf} suggest a
quadratic relationship, and similar experimentation on closed triangulations
suggests a cubic relationship.  Even if we examine triangulations with
extremely small output sizes (such as layered lens spaces, where
the number of admissible vertices is linear in $n$ \cite{burton10-dd}),
this small polynomial relationship appears to be preserved (for
layered lens spaces we visit $O(n^3)$ nodes in total).

Although the tree traversal algorithm cannot solve the
vertex enumeration problem for \emph{general} polytopes in polynomial time,
there is much contextual information to be
gained from normal surface theory and the quadrilateral constraints.
Further research into this polynomial time conjecture could
prove fruitful: a counterexample would be informative, and a proof would
be a significant breakthrough
for the complexity of decision algorithms in low-dimensional topology.

%
%

\section*{Acknowledgements}

The authors are grateful to RMIT University for the use of their
high-performance computing facilities.
The first author is supported by the Australian Research Council
under the Discovery Projects funding scheme
(project DP1094516).

%
%

\small
\bibliographystyle{amsplain}
\bibliography{pure}

%
%

\bigskip
\noindent
Benjamin A.~Burton \\
School of Mathematics and Physics, The University of Queensland \\
Brisbane QLD 4072, Australia \\
(bab@maths.uq.edu.au)

\bigskip
\noindent
Melih Ozlen \\
School of Mathematical and Geospatial Sciences, RMIT University \\
GPO Box 2476V, Melbourne VIC 3001, Australia \\
(melih.ozlen@rmit.edu.au)

\end{document}